\date{\today}
\newtheorem{theorem}{Theorem}[section]
\newtheorem{lemma}[theorem]{Lemma}
\newtheorem{prop}[theorem]{Proposition}
\newtheorem{corollary}[theorem]{Corollary}
\newcommand\sqr[2]{{\vbox{\hrule height.#2pt
    \hbox{\vrule width.#2pt height#1pt \kern#1pt
        \vrule width.#2pt}\hrule height.#2pt}}}
\renewcommand\qed{%
	\ifmmode\eqno\sqr53
	\else\nolinebreak\ \hfill\sqr53\medbreak\fi}
\newcommand\cH{{\mathcal H}}
\newcommand\cI{{\mathcal I}}
\newcommand\cL{{\mathcal L}}
\newcommand\cP{{\mathcal P}}
\newcommand\G{{\Gamma}}
\newcommand\fld{{\mathbb F}}
\newcommand\bmat[1]{\begin{bmatrix} #1 \end{bmatrix}}
\newcommand\mat[3]{\mathrm{Mat}_{#1\times #2}(#3)}
\DeclareMathOperator\Aut{Aut}
\DeclareMathOperator\Sym{Sym}
\DeclareMathOperator{\PG}{PG}
\DeclareMathOperator{\GQ}{GQ}
\DeclareMathOperator{\pg}{pg}
\DeclareMathOperator{\PGaL}{P\Gamma L}
\title{Pseudo-geometric strongly regular graphs\\ with a regular point}
\author{Edwin R.~van Dam\footnote{Dept. Econometrics and OR, Tilburg University, the Netherlands. email: \protect\url{edwin.vandam@uvt.nl}}, Krystal Guo\footnote{Korteweg-de Vries Institute, University of Amsterdam, Amsterdam, the Netherlands. email: \protect\url{k.guo@uva.nl}}}
\date{April 9, 2026}
\begin{document}
\maketitle

\begin{abstract}
We study pseudo-geometric strongly regular graphs whose second subconstituent with respect to a vertex is a cover of a strongly regular graph or a complete graph. We call such a vertex a \textsl{regular point}, thereby generalizing regular points in generalized quadrangles. We characterize all graphs containing a  regular point, and use our characterization to find many new strongly regular graphs. Thereby, we give a partial answer to a question posed by Gardiner, Godsil, Hensel, and Royle. We give an explicit construction for $q$ new, pairwise non-isomorphic graphs with the same parameters as the collinearity graph of generalized quadrangles of order $(q,q)$ and a new non-geometric graph with the same parameters as the collinearity graph of the Hermitian generalized quadrangle of order $(q^2,q)$, for prime powers $q$. 
 Using our characterization, we computed $133\,718$  new strongly regular graphs with parameters $(85,20,3,5)$ and $27\,298$ strongly regular graphs with parameters $(156,30,4,6)$.
 
 \noindent\textit{Keywords:} strongly regular graphs, generalized quadrangles, graph eigenvalues
 
  \noindent\textit{Mathematics Subject Classifications 2020:} 05E30, 51E12, 05C50
\end{abstract}

\section{Introduction}\label{sec:intro}

We study pseudo-geometric strongly regular graphs (i.e., with the same parameters as those coming from generalized quadrangles) whose second subconstituent with respect to a vertex is a cover of a strongly regular graph or a complete graph. We call such a vertex a \textsl{regular point}, thereby generalizing regular points in generalized quadrangles. We characterize all graphs containing a  regular point, and use our characterization to find many new strongly regular graphs. Thereby, we give a partial answer to a question posed by Gardiner, Godsil, Hensel, and Royle \cite{GARDINER1992161}. We give an explicit construction for $q$ new, pairwise non-isomorphic graphs with the same parameters as the collinearity graph of generalized quadrangles of order $(q,q)$ and a new non-geometric graph with the same parameters as the collinearity graph of the Hermitian generalized quadrangle of order $(q^2,q)$, for prime powers $q$.

Strongly regular graphs were introduced by Bose \cite{Bose1963} and form an important area of study in algebraic graph theory and coding theory; see the recent monograph by Brouwer and Van Maldeghem \cite{BvMSRG}. The automorphism group $G$, of a graph $\Gamma$ acts on the pairs of vertices of $\Gamma$; this action has at least three orbits, corresponding to pairs of vertices that are equal, adjacent   and non-adjacent, respectively. If there are exactly three orbits, the graph is said to be \textsl{rank 3}. Strongly regular is a relaxation of this symmetry property; a graph is \textsl{strongly regular} with parameters $(\nu,k,\lambda,\mu)$ if it is a $k$-regular graph on $\nu$ vertices, any two adjacent vertices have precisely $\lambda$ common neighbours and any two non-adjacent vertices have precisely $\mu$ common neighbours. Every rank $3$ graph is strongly regular. Though the converse is not true, many strongly regular graphs arising from algebraic constructions are rank $3$. 

Much, relatively recent, work has been done on the construction of new families of strongly regular graphs. For example, 
Crnkovi\'{c}, \v{S}vob, and Tonchev \cite{CrnkovicDean2021Srgw} found twelve new strongly regular graphs with parameters $(81, 30, 9, 12)$ using automorphisms of the previously known graphs. In addition, this search led to the discovery of a new partial geometry $\pg(5,5,2)$, that was independently constructed by Kr\v{c}adinac \cite{KRCADINAC2021105493}. Feng and Xiang \cite{FENG2012982} and Feng, Momihara, and Xiang \cite{FenMomXia2014} gave a construction of strongly regular graphs using cyclotomic classes and skew Hadamard different sets, thereby extending the fundamental work of Van Lint and Schrijver \cite{VanLintSchrijver}.  
Several other methods of finding new strongly regular graphs with switching operations have been recently explored by Ihringer and Munemasa \cite{IHRINGER2019464} and Ihringer, Pavese, and Smaldore \cite{ IHRINGER2021112560}. 
Less recently, but closer to our results, Brouwer, Ivanov, and Klin \cite{Bro1989} constructed an infinite family of strongly regular graphs whose second subconstituent is again strongly regular.

We observed that the second subconstituent, $\Gamma_2$, of the collinearity graph of the symplectic quadrangle $W(3)$ with respect to any vertex is a distance-regular antipodal $3$-cover of $K_{9}$. There are nine other strongly regular graphs $\G$ cospectral with the collinearity graph of $W(3)$, which have a vertex $v$ such that the second subconstituent of $\G$ with respect to $v$ is isomorphic to $\Gamma_2$. This led us to believe that we can construct strongly regular graphs cospectral with the collinearity graph of $W(q)$ for prime powers $q$, by extending the second subconstituent of $W(q)$ appropriately. We succeed in this endeavour in more generality and obtain a characterization of pseudo-geometric strongly regular graphs having a so-called regular point in Theorem \ref{thm:characterizewithscheme}. 

The second subconstituents of strongly regular graphs were first extensively studied in \cite{CAMERON1978257}. 
Gardiner proved that the second neighbourhood of any vertex in a Moore graph of diameter two is an antipodal distance-regular graph of diameter three \cite{Gar1974}. 
Further, Gardiner et al.~\cite{GARDINER1992161} show that if every
vertex in a strongly regular graph $G$ has an antipodal distance-regular graphs of diameter three as its second subconstituent, then $G$ is the noncollinearity graph of a
special type of semipartial geometry. 

Cameron, Goethals and Seidel showed in \cite{CAMERON1978257} that any cospectral mate of a generalized quadrangle of order $(q,q^2)$ is geometric; that is, every strongly regular graph with the same parameters as the collinearity graph of a generalized quadrangle of order $(q,q^2)$ is the collinearity graph of such a generalized quadrangle. 
Our results show that the collinearity graphs of generalized quadrangles of orders $(q,q)$ and $(q^2,q)$ are different, that is, there are strongly regular graphs with the same parameters that are not geometric.

We have organized our results as follows. In Section \ref{sec:prelim}, we give necessary definitions and recall theorems and concepts we need throughout the paper. In Section \ref{sec:4class}, we give a characterization, in Theorem \ref{thm:characterizewithscheme}, of all graphs with the same parameter set as a generalized quadrangle of order $(s,t)$, where $s\geq t$, with a regular point. In Section \ref{sec:newgraphs}, we describe how we can use this characterization to construct new strongly regular graphs from known ones with a regular point. In Section \ref{sec:HGQ}, we apply this to the Hermitian generalized quadrangle of order $(q^2,q)$.
In Section \ref{sec:2nbhdextensions}, we specify to the generalized quadrangles of order $(q,q)$, in particular the symplectic quadrangle $W(q)$,
and answer a question posed by Gardiner et al.~\cite{GARDINER1992161} about graphs with the same parameters as $W(q)$, where one (but not necessarily all) vertex is a regular point, by giving the full characterization. We use this to give explicit constructions of such graphs in Section \ref{sec:construction}; in particular, in Theorem \ref{thm:Wqmates} we show that the symplectic generalized quadrangle $W(q)$ has at least $q$ (pairwise non-isomorphic) cospectral mates, for $q \geq 3$. Using this construction, we carried out computations in SageMath \cite{sage}, which we describe in Section \ref{sec:computations}; in particular, we computed $133\,718$  new isomorphism classes of strongly regular graphs with parameters $(85,20,3,5)$ and $27\,298$ new isomorphism classes of strongly regular graphs with parameters $(156,30,4,6)$. A database of the new graphs can be found in \cite{newgrs}. We end with some final observations and future directions in Section \ref{sec:end}. Among others, we there give a more geometric characterization of regular points in Theorem \ref{thm:simplescharacterization}, and describe how we can use our results to construct also new pseudo-geometric strongly regular graphs with the same parameters as the collinearity graph of a generalized quadrangle of order $(q-1,q+1)$, and related antipodal distance-regular $q$-covers of the complete graph on $q^2$ vertices in Section \ref{sec:q-1q+1}. 

\section{Preliminaries}\label{sec:prelim}

A \textsl{generalized quadrangle of order $(s,t)$}, denoted $\GQ(s,t)$, is a point-line incidence structure such that
\begin{enumerate}[(i)]
	\item each point is incident with $t+1$ lines and any two points are incident with at most one line;
	\item each line is incident  with $s+1$ points and any two lines are incident with at most one point; and
	\item for every point $x$ and line $L$ not incident with $x$, there exists a unique pair $(y,M)$ such that $M$ is incident with both $x$ and $y$ and $y$ is incident with $L$. 
\end{enumerate}
For background on generalized quadrangles, we refer to \cite{PT}. The \textsl{collinearity graph} (or \textsl{point graph}) of a generalized quadrangle is the graph whose vertices are the points of the generalized quadrangle and where two point are adjacent if there exists a line incident with both points. The collinearity graph of a generalized quadrangle is strongly regular with parameters
\[
((s+1)(st+1), s(t+1), s-1, t+1).
\]
When the context is clear, we speak of the generalized quadrangle and its collinearity graph exchangeably. 
For a graph $\G$ and a vertex $x$ of $\G$, we will denote by $\G_i(x)$ the \textsl{$i$th subconstituent} or \textsl{$i$th neighbourhood} of $x$, which is the graph induced by the set of vertices at distance $i$ from $x$. 

By way of example, we will give the construction of the \textsl{symplectic generalized quadrangle}, denoted $W(q)$, whose points are the points of $\PG(3,q)$ and whose lines are a subset of the lines of $\PG(3,q)$. Let $q$ be a prime power. Following \cite[Section 5.5]{GR}, let $S \in \mat{4}{4}{\fld_q}$ be the following matrix:
\[ S = \bmat{0 & 1 & 0 & 0 \\
 -1 & 0 &0 &0 \\
 0&0&0&1 \\
 0 & 0 &-1&0}.
 \]
 A line of $\PG(3,q)$ is said to be \textit{totally isotropic} if $u^{\top} S v = 0$ for all $u,v$ on the line. Let $W(q)$ be the point-line incidence structure whose points are the points of $\PG(3,q)$ and lines are the isotropic lines of $\PG(3,q)$. We have that $W(q)$ is a generalized quadrangle of order $(q,q)$. 
Figure \ref{fig:gq22} shows the collinearity graph of $W(2)$. 

\begin{figure}[htbp]
    \centering
    \includegraphics[scale=0.5]{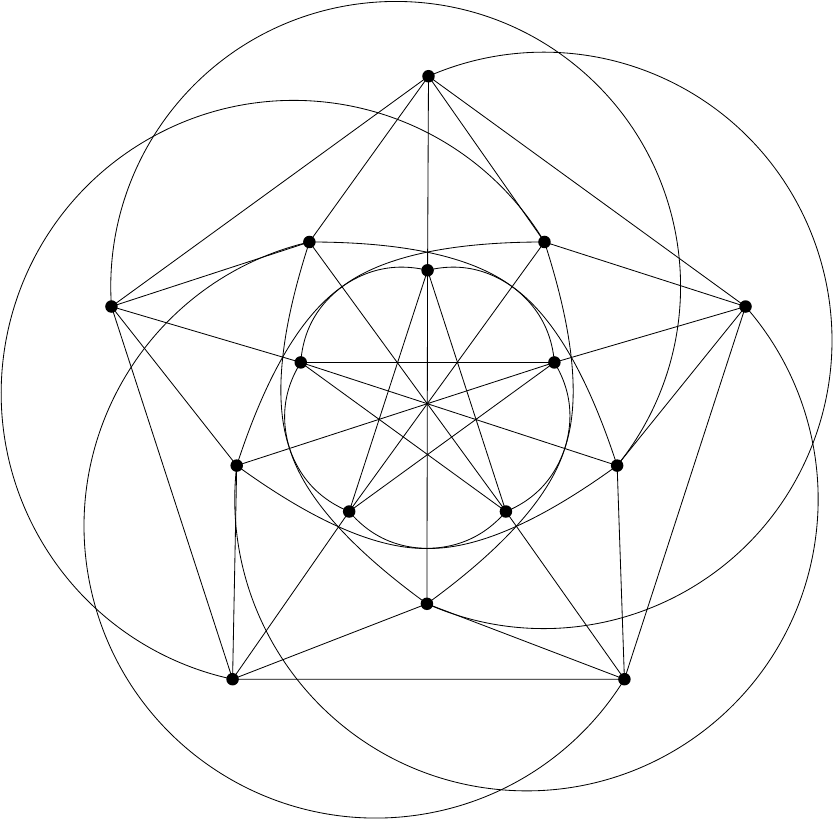} 
    \caption{The collinearity graph of $W(2)$, the unique $\GQ(2,2)$, up to isomorphism. It is a strongly regular graph with parameters $(15,6,1,3)$.     \label{fig:gq22}}
\end{figure}

Another generalized quadrangle which we look at in this paper is the \textsl{Hermitian generalized quadrangle} which we will denote $H(3,q^2)$. Let $H$ be a nonsingular Hermitian variety of the projective space $\PG(3,q^2)$, where $q$ is a prime power. Then the points and lines of $H$ give a $\GQ(q^2,q)$, see \cite[Chapter 3]{PT}.

For a set of points $X$ in a generalized quadrangle $\GQ(s,t)$, we denote by $X^{\perp}$ the set of points that are collinear to all points in $X$ (note that a point is collinear to itself). The \textsl{span} of  points $x,y$, is the set $\{x,y\}^{\perp \perp}$. 

If $x,y$ are distinct, collinear points, or if $x,y$ are not collinear and $| \{x,y\}^{\perp \perp}| = t+1$, then the pair $(x,y)$ is said to be \textsl{regular}. A point $x$ is regular if $(x,y)$ is regular for all points $y\neq x$. 

In the collinearity graph, a regular point $v$ is a vertex such that, for all $x \in \G_2(v)$, the common neighbours of $x$ and $v$ have $t+1$ common neighbours. Figure \ref{fig:gq22-regpt} shows the collinearity graph of $W(2)$, with a regular pair, $(x,y)$. 
\begin{figure}[htbp]
    \centering
    \includegraphics[scale=0.6]{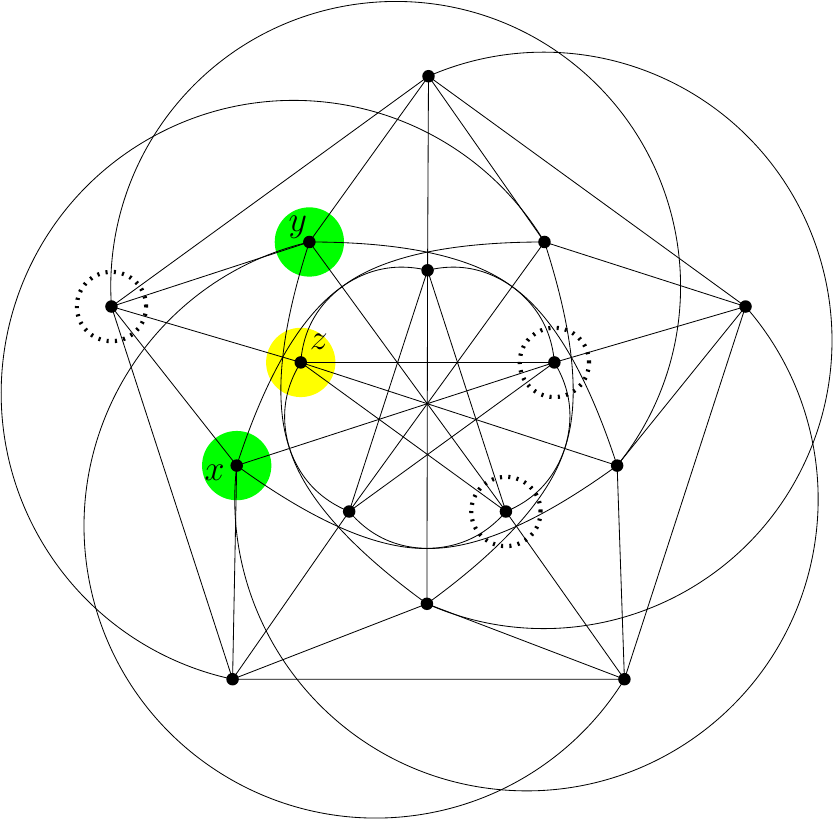}
    \caption{The collinearity graph of $W(2)$, with two vertices $x,y$ highlighted in green. The common neighbours of $x$ and $y$ are shown with dotted circles. The span of $x$ and $y$,  $| \{x,y\}^{\perp \perp}| = t+1$, consists of $x,y$ together with $z$, the vertex highlighted in yellow.    \label{fig:gq22-regpt}}
\end{figure}

For example, Figure \ref{fig:gq22-nbhds} shows the subconstituents of the collinearity graph of $W(2)$, with respect to vertex $u$; the second subconsituent of $W(2)$ is a copy of the cube graph. 

\begin{figure}
    \centering
    \includegraphics[scale=0.38]{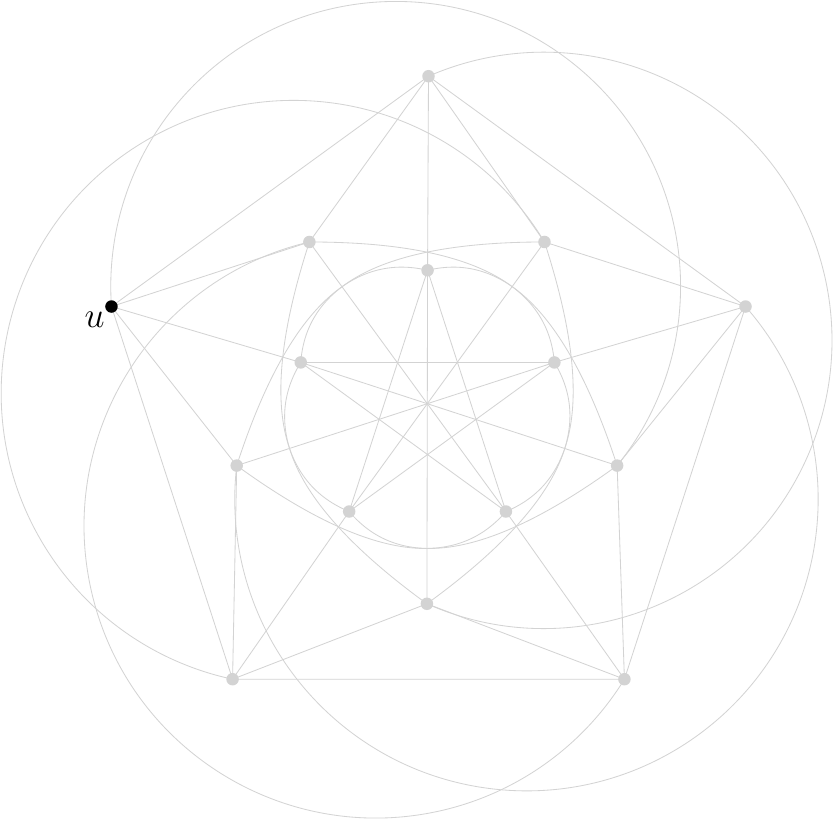}
      \includegraphics[scale=0.38]{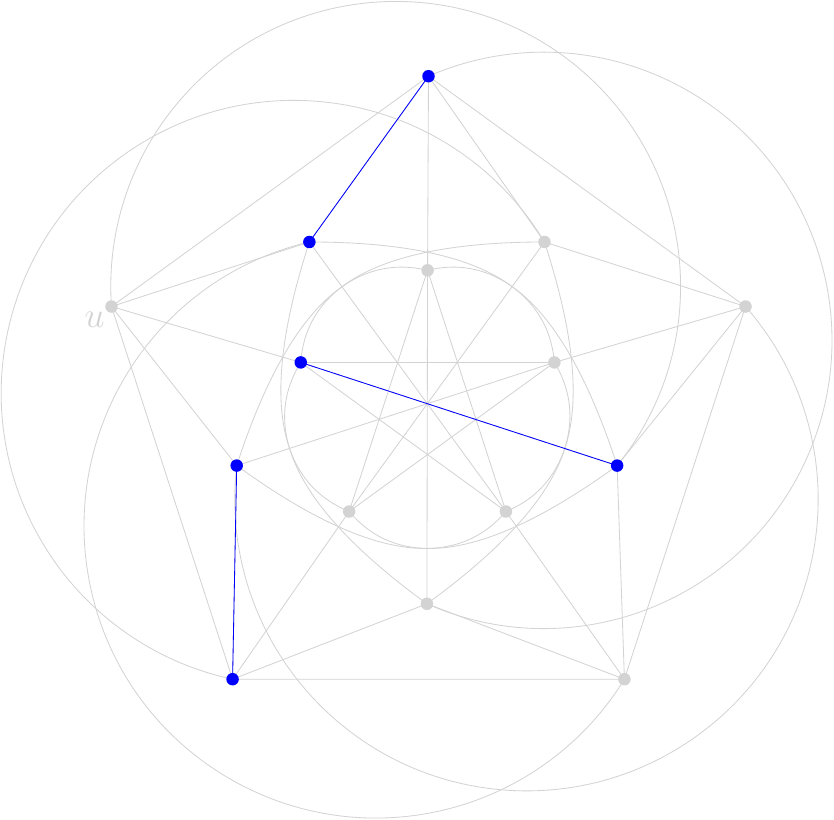}
        \includegraphics[scale=0.38]{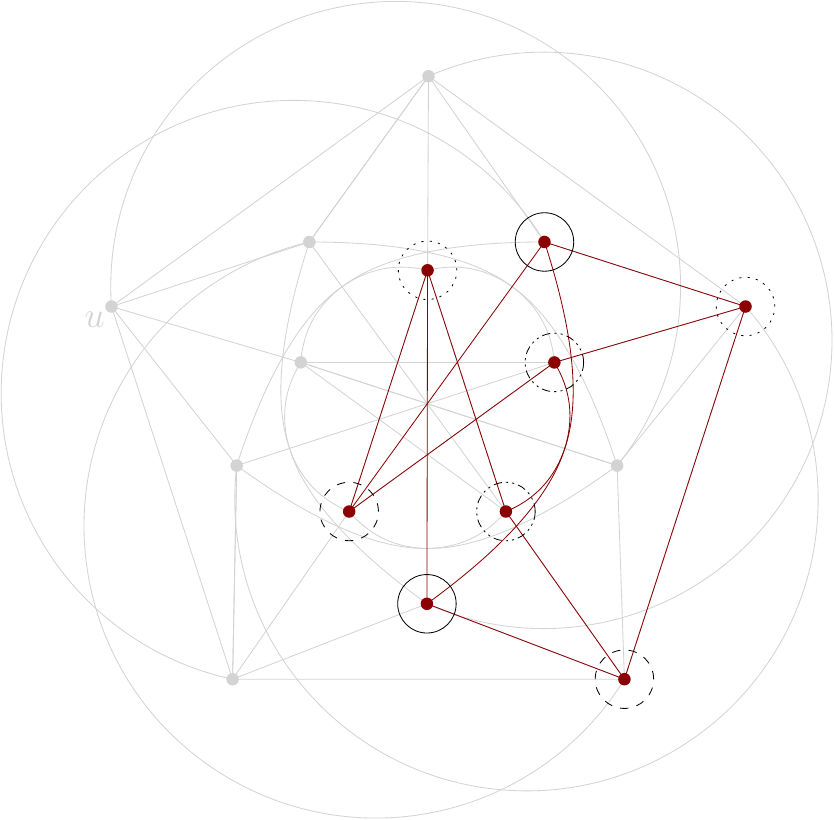}
    \caption{The subconstituents of the collinearity graph of $W(2)$, with respect to vertex $u$. The second subconsituent is isomorphic to the cube graph, which is an antipodal, distance-regular $2$-cover of $K_4$; its fibres are shown by the circles surrounding the vertices.   \label{fig:gq22-nbhds}}
\end{figure}

As a final preliminary result, we mention Godsil-McKay switching, as presented in the following lemma, which is a common way of constructing cospectral graphs.

\begin{lemma} \cite[Godsil-McKay switching]{GMswitching}.\label{lem:GM}
Let $\G$ be a graph and let $\Pi=\{D,C_1,\ldots,C_m\}$
be a partition of the vertex set of $\G$ such that $\{C_1,\ldots,C_m\}$ is an equitable partition in $\G$.
Suppose that for all $i$, each vertex in $D$ is adjacent to none, half, or all of the vertices in $C_i$.
Let $\G'$ be obtained from $\G$ as follows.
For all $i$ and each vertex in $D$ that is adjacent to half of the vertices in $C_i$,
delete the corresponding $\frac{1}{2}|C_i|$ edges and
join the vertex instead to the $\frac{1}{2}|C_i|$ other vertices in $C_i$.
Then $\G$ and $\G'$ have the same spectrum.
\end{lemma}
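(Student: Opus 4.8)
The plan is to produce an orthogonal matrix $Q$ with $Q^{-1} A Q = A'$, where $A$ and $A'$ are the adjacency matrices of $\G$ and $\G'$; since orthogonal conjugation preserves eigenvalues, this yields the claim at once. I would order the vertices by the cells $C_1,\dots,C_m,D$ and take $Q$ block diagonal, equal to the identity on the block indexed by $D$ and equal to $Q_i := \tfrac{2}{n_i} J - I$ on the block indexed by $C_i$, where $n_i := |C_i|$ and $J$ denotes the all-ones matrix of the size dictated by context. Using $J^2 = n_i J$ one checks $Q_i^2 = I$, so each $Q_i$, and hence $Q$, is symmetric and involutory, therefore orthogonal; in particular $Q^{-1} A Q = Q A Q$, and it remains to verify $QAQ = A'$ block by block against the partition $C_1,\dots,C_m,D$.

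For two cells $C_i,C_j$ among the equitable cells the relevant block of $QAQ$ is $Q_i A_{ij} Q_j$, where $A_{ij}$ is the $C_i$--$C_j$ submatrix of $A$. Here the equitability hypothesis is exactly what is needed: it makes the row sums of $A_{ij}$ constant, say equal to $b_{ij}$, so that $A_{ij}\ones = b_{ij}\ones$, and (applying it to $C_j$, or equivalently using symmetry of $A$) it makes the column sums constant, $\ones^{\top} A_{ij} = b_{ji}\ones^{\top}$; double counting the $C_i$--$C_j$ edges gives $n_i b_{ij} = n_j b_{ji}$. Expanding $Q_i A_{ij} Q_j$ and replacing each of $J A_{ij}$, $A_{ij} J$, and $J A_{ij} J$ by its value as a multiple of the all-ones matrix, the three terms containing $J$ cancel on account of $n_i b_{ij} = n_j b_{ji}$, leaving exactly $A_{ij}$. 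Thus the whole subgraph on $C_1\cup\cdots\cup C_m$ is fixed by the conjugation --- as it must be, since the switching alters none of its edges --- and the $D$--$D$ block is fixed because $Q$ is the identity there.

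The substance of the argument is in the $C_i$--$D$ blocks, which carry precisely the edges that switching changes. Conjugation acts there as $Q_i A_{iD}$, so it is enough to understand $Q_i$ applied to a single column $\chi\in\{0,1\}^{C_i}$, the indicator of the neighbours in $C_i$ of a fixed $d\in D$. Since $J\chi = (\ones^{\top}\chi)\ones$, I would run the three permitted cases: if $d$ has no neighbour in $C_i$ then $\chi=0$ and $Q_i\chi=0$; if $d$ is adjacent to all of $C_i$ then $\chi=\ones$ and $Q_i\chi=\ones$; and if $d$ is adjacent to exactly half then $\ones^{\top}\chi = \tfrac12 n_i$ forces $Q_i\chi = \ones-\chi$, the complement of $\chi$ within $C_i$. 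These three outcomes are precisely the ``leave it, leave it, swap to the other half'' rules defining $\G'$, so $Q_i A_{iD}$ is the $C_i$--$D$ block of $A'$; the $D$--$C_i$ blocks follow by transposition, since every matrix involved is symmetric. With all blocks matched, $QAQ = A'$ and $\G,\G'$ are cospectral.

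I expect the only genuinely delicate point to be the $C_i$--$C_j$ computation: one must use equitability to pin down both the row and the column sums of the off-diagonal blocks and then notice the double-counting identity $n_i b_{ij} = n_j b_{ji}$ that makes the stray $J$-terms cancel. By comparison, the orthogonality of $Q$ is a one-line check and the $C_i$--$D$ analysis is a transparent three-case split.
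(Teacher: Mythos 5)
Your proof is correct and is essentially the canonical argument of Godsil and McKay: conjugation by the block-diagonal orthogonal involution with blocks $\tfrac{2}{n_i}J-I$ on each $C_i$ and the identity on $D$, with equitability and the double count $n_i b_{ij}=n_j b_{ji}$ killing the stray $J$-terms on the $C_i$--$C_j$ blocks and the three-case column analysis realizing the switch on the $C_i$--$D$ blocks. The paper itself gives no proof of this lemma (it is quoted from the cited reference), so there is nothing to compare against beyond noting that your argument is the standard one and is complete.
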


For definitions and background on association schemes and distance-regular graphs, we refer to the monographs by Brouwer, Cohen, and Neumaier \cite{BCN}, Brouwer and Van Maldeghem \cite{BvMSRG}, or Godsil and Royle \cite{GR}.

\section{Regular points in pseudo-geometric graphs}
\label{sec:4class}

In this section, we will generalize regular points in generalized quadrangles to the corresponding pseudo-geometric graphs, by considering $4$-class imprimitive association schemes on the second subconstituent.
In the main theorem, Theorem \ref{thm:characterizewithscheme}, we characterize all graphs with the same parameter set as a generalized quadrangle of order $(s,t)$, where $s\geq t$, with a regular point. We will then use this characterization to construct new pseudo-geometric graphs, in particular for the Hermitian generalized quadrangle.

\subsection{A four-class association scheme}

Consider a regular point $v$ in a generalized quadrangle of order $(s,t)$ with $s \geq t \geq 2$. If we remove $v$ from the so-called hyperbolic lines $\{v,x\}^{\perp \perp}$, then these sets (of size $t$) partition the points in the second subconstituent $\Gamma_2(v)$ of the collinearity graph $\G$, and two points are in the same hyperbolic line if and only if they are at distance $3$ in (the induced graph on) $\G_2(v)$. In this graph, there are two kinds of points at distance $2$ from a given point $x$: those that are collinear to some other point on the hyperbolic line through $x$, and those that are not. This describes a $4$-class imprimitive association scheme on the second subconstituent if $s>t$, as was shown by Ghinelli and L\"{o}we \cite{GHINELLI199587}, who also characterized generalized quadrangles by such a scheme. The first eigenmatrix $P$ of the scheme is as in \eqref{eq:eigenmatrix} below. In the extremal case that $s=t$, one of the distance-two relations is void (there are none of ``those that are not''), and one obtains a metric $3$-class association scheme corresponding to a distance-regular antipodal cover of a complete graph. 

We note that the hyperbolic lines (minus $v$) form the so-called fibres of the scheme.
Between any two such fibres there is either a matching or nothing; the induced quotient scheme on the fibres is a $2$-class association scheme corresponding to a strongly regular Latin square graph with parameters $(s^2,(t+1)(s-1),t^2-t+s-2,t(t+1))$ (except for $s=t$, when it is a $1$-class scheme and the graph is complete).

This strongly regular graph in fact corresponds to an incidence structure between the first and second subconstituents that will play a crucial role in the construction of our new strongly regular graphs.
As points of this incidence stucture we take the $s^2$ fibres, and as blocks we take the $(t+1)s$ vertices in $\Gamma_1(v)$. These blocks are naturally partitioned into $t+1$ groups of size $s$ (the lines through the regular point $v$). A point is incident to a block whenever the block is adjacent to the vertices in the fibre.

It follows that each point is incident to $t+1$ blocks, one from each group, and each block is adjacent to $s$ points.
Two blocks from different groups share at most one point, so we have a partial linear space. Because two blocks from the same group are not incident to a common point, each group of blocks induces a partition of the point set, so we have in fact a $(t+1)$-net of order $s$. Other terminology for this is a partial geometry $\pg(s-1,t,t$) or an affine resolvable $1$-$(s^2,s,t+1)$ design. In the extremal case that $s=t$, this is in fact an affine plane. It is also equivalent to a set of $t-1$ mutually orthogonal Latin squares, and this corresponds exactly to the above strongly regular Latin square graph. 

\subsection{A characterization of regular points} 

We now generalize the above to the corresponding pseudo-geometric graphs, that is, to strongly regular graphs with parameters $((s+1)(st+1),s(t+1),s-1,t+1)$. In some of the literature, e.g. \cite{GUO2020103128}, these are also called pseudo-generalized quadrangles. We call a vertex in such a graph $\G$ a regular point if the induced graph on the second subconstituent generates an imprimitive $4$-class association scheme  $\cH$ with (first) eigenmatrix 
\begin{equation}\label{eq:eigenmatrix}
P=\begin{bmatrix} 1 & (s-1)(t+1) & (s-1)(t^2-1) & t-1 & (s-1)t(s-t)\\
1 &s-1 & 1-s &-1 & 0\\
1 &s-t-1 & (t-1)(s-t-1) &t-1 & -t(s-t)\\
1 &-t-1 & t+1 &-1 & 0\\
1 &-t-1 & 1-t^2 &t-1 & t^2
\end{bmatrix}.
\end{equation}

We recall that we assume that $s \geq t$, where we allow the degenerate case that $s=t$; in this case we actually have a $3$-class scheme, because the final relation ($B_4$ below) is empty. All of below arguments remain valid in this extremal case. 

The above definition may look quite restrictive, and it does not resemble the definition of regular points in generalized quadrangles. It does however lead us straight to our main results below. Later on, in Theorem \ref{thm:simplescharacterization}, we will show that regular points are indeed very similar to the ones in generalized quadrangles.

We will denote the adjacency matrices (and for convenience also the corresponding relations) in the scheme by $B=B_1$, $B_2$, $B_3$, and $B_4$, respectively. All intersection numbers of the scheme follow from the eigenmatrix $P$, and it follows that the distance-$2$ graph of $B$ splits into the relations $B_2$ and $B_4$, as the number of common neighbours varies between $p^2_{11}=t$ and $p^4_{11}=t+1$, respectively. In particular, we obtain the equation
\begin{equation}\label{eq:B^2}
B^2=(s-1)(t+1)I+(s-2)B+tB_2+(t+1)B_4.
\end{equation}

We note that the distance-$3$ graph $B_3$ is a disjoint union of $t$-cliques; these are the fibres of the scheme. Thus, after appropriate reordering of the vertices, we can write $B_3=I \otimes J_t-I$ (by $J$ we denote an all-ones matrix). In fact, the quotient scheme on the fibres is a $2$-class scheme with matrices $\hat{B}$ and $\hat{B_4}$, and we can write $B+B_2=\hat{B} \otimes J_t$ and $B_4=\hat{B_4} \otimes J_t$.
More specifically, $\hat{B}$ is a strongly regular graph on $s^2$ vertices with spectrum $\{(s-1)(t+1)^{[1]},s-t-1^{[(s-1)(t+1)]},-t-1^{[(s-1)(s-t)]}\}$ (again, note that there is degeneracy if $s=t$, in which case $\hat{B}$ is a complete graph).

In the following, we will characterize the above situation, and then exploit it to construct pseudo-geometric graphs with a regular point from generalized quadrangles with a regular point.

To begin with, we denote the adjacency matrix of the strongly regular graph $\G$ by $A$, and observe that it satisfies the equation 
\begin{equation}\label{eq:Asquared}
A^2=(s-t-2)A+(t+1)(s-1)I+(t+1)J.
\end{equation}
When we partition $A$ according to the subconstituents as
\begin{equation}\label{eq:Apartition}
A=\begin{bmatrix}
0 &j^{\top} & 0^{\top}\\
j & C & N\\
0 & N^{\top} &B
\end{bmatrix},
\end{equation}
then the quadratic equation for $A$ gives some relevant equations for the subconstituents $C$ and $B$, and the incidence matrix $N$ between them.
Besides the fact that all submatrices have constant row and column sums, we find that 
\begin{align}
C^2+ NN^{\top}&=(s-t-2)C+(t+1)(s-1)I+tJ,\label{eq:C^2N}\\
CN+NB&=(s-t-2)N+(t+1)J,\label{eq:CNB}\\
B^2+ N^{\top}N&=(s-t-2)B+(t+1)(s-1)I+(t+1)J\label{eq:B^2N}.
\end{align}

\begin{lemma}\label{lem:spectrum} $C$ is a disjoint union of $t+1$ $s$-cliques if and only if $B$ has spectrum $\{(t+1)(s-1)^{[1]},s-1^{[m_2]},s-t-1^{[(t+1)(s-1)]},[-t-1]^{[m_4]}\}$, with $m_2=s^2(t^2-1)/(s+t)$ and $m_4=t(s-1)(s^2-t)/(s+t)$.
\end{lemma}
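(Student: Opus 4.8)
The plan is to prove the biconditional by extracting spectral information from the matrix equations \eqref{eq:C^2N}, \eqref{eq:CNB}, and \eqref{eq:B^2N}, together with the known global spectrum of $\G$. The strongly regular graph $\G$ has parameters $((s+1)(st+1),s(t+1),s-1,t+1)$, so from \eqref{eq:Asquared} its eigenvalues are $k=s(t+1)$ with multiplicity $1$, together with $\theta=s-t-1$ and $\tau=-t-1$; the multiplicities of $\theta$ and $\tau$ are determined by the parameters and I would record them explicitly. The key structural fact I would exploit is that, because $N$ is the incidence matrix between the first and second subconstituents and all blocks in the partition \eqref{eq:Apartition} have constant row and column sums, the all-ones vector behaves predictably and $NN^\top$, $N^\top N$ share their nonzero spectrum. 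I would set up the argument so that knowing $C$ (equivalently $NN^\top$) pins down $N^\top N$, and hence, via \eqref{eq:B^2N}, pins down $B$.

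For the forward direction, assume $C = (t+1)(I_{t+1}\otimes J_s) - I$ is a disjoint union of $t+1$ cliques of size $s$. Then $C$ has eigenvalue $s-1$ (on the $t+1$ clique-indicator vectors) and $-1$ (with multiplicity $(t+1)(s-1)$). First I would compute $NN^\top$ from \eqref{eq:C^2N}: since $C^2 = (s-2)C + (s-1)I$ for a disjoint union of $s$-cliques, substituting gives $NN^\top$ as an explicit polynomial-free matrix, from which I read off the spectrum of $NN^\top$, and therefore the nonzero spectrum of $N^\top N$. Plugging $N^\top N$ into \eqref{eq:B^2N} turns it into a quadratic-type relation $B^2 = (s-t-2)B + (\text{const})I + (t+1)J - N^\top N$, and restricting to the various common eigenspaces (the Perron direction, the orthogonal complement inside the column space of $N^\top$, and the kernel of $N^\top N$) lets me solve for the eigenvalues of $B$ on each piece. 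The candidate eigenvalues $(t+1)(s-1)$, $s-1$, $s-t-1$, and $-t-1$ should fall out, and the multiplicities $m_2$ and $m_4$ are then forced by two linear constraints: the total dimension $\sum m_i = s^2 t$ (the number of vertices in $\Gamma_2(v)$) and the trace $\tr B = 0$. I would verify that solving these two equations reproduces $m_2 = s^2(t^2-1)/(s+t)$ and $m_4 = t(s-1)(s^2-t)/(s+t)$.

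For the converse, I would run essentially the same linear algebra in reverse. Assuming $B$ has the stated spectrum, I would use \eqref{eq:B^2N} to recover $N^\top N$ (its spectrum, hence that of $NN^\top$), feed this into \eqref{eq:C^2N} to obtain $C^2$ as a known combination of $C$, $I$, and $J$, and conclude that $C$ satisfies the same quadratic $C^2 = (s-2)C + (s-1)I$ on the appropriate subspace that characterizes a disjoint union of $s$-cliques. The point is that $C$ is a graph on the first subconstituent $\Gamma_1(v)$, which has $(t+1)s$ vertices and is regular of degree $s-1$ (each vertex of $\Gamma_1(v)$ has $a = s-1$ neighbours, all within $\Gamma_1(v)$ since two collinear points lie on a common line through $v$); a graph that is $(s-1)$-regular on $(t+1)s$ vertices and whose adjacency matrix satisfies $C^2 = (s-2)C + (s-1)I$ has minimal polynomial $(x+1)(x-(s-1))$ and is therefore a disjoint union of $(s-1)$-regular connected components each of which is a complete graph $K_s$, giving exactly $t+1$ of them.

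I expect the main obstacle to be the bookkeeping in the converse direction, specifically making the inference $C^2 = (s-2)C+(s-1)I$ fully rigorous rather than merely matching spectra. Matching eigenvalues alone shows $C$ has the right two eigenvalues $s-1$ and $-1$, but I must also confirm the correct multiplicities and that the eigenvalue $s-1$ occurs exactly $t+1$ times with the all-ones structure on each component, so that the components really are cliques and not merely co-spectral to a union of cliques. The cleanest route is to show that the matrix $M := C^2 - (s-2)C - (s-1)I$ is positive semidefinite (or to show $\tr M = 0$ together with $M \succeq 0$), forcing $M = 0$; this uses that $M = tJ - NN^\top$ from \eqref{eq:C^2N} and that the spectrum of $NN^\top$ has been determined. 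I would need to check carefully that the arithmetic of the multiplicities makes $\tr(C^2 - (s-2)C - (s-1)I)$ vanish, at which point the positive semidefiniteness of $C^2-(s-2)C-(s-1)I = tJ - NN^\top$ (equivalently, that the largest eigenvalue of $NN^\top$ is exactly $ts$, attained only in the Perron direction) closes the argument.
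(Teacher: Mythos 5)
Your overall strategy --- extracting the spectrum of $NN^{\top}$ and $N^{\top}N$ from one subconstituent equation and pushing it through the other --- is close in spirit to the paper's purely spectral argument, but as written it has a genuine gap, and the patch you propose does not close it. The issue is that you only ever use \eqref{eq:C^2N} and \eqref{eq:B^2N}, i.e.\ the singular values of $N$, and never the mixed equation \eqref{eq:CNB}. Concretely, in the forward direction: once $C^2=(s-2)C+(s-1)I$, equation \eqref{eq:C^2N} gives $NN^{\top}=t\bigl((s-1)I+J-C\bigr)$, so $N^{\top}N$ has eigenvalues $st(t+1)$ (once), $st$ (with multiplicity $(t+1)(s-1)$) and $0$. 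Since $N^{\top}N$ is a polynomial in $B$ and $J$ by \eqref{eq:B^2N}, it commutes with $B$, and on a common eigenvector in $j^{\perp}$ where $N^{\top}N$ acts as $st$ you get $\theta^{2}-(s-t-2)\theta-(s-t-1)=0$, i.e.\ $\theta\in\{s-t-1,\,-1\}$: \emph{two} roots. Nothing in your argument selects $s-t-1$ on each such eigenvector, and your two linear constraints (total dimension and $\operatorname{tr}B=0$) do not force it; the $\operatorname{tr}B^{2}$ condition is automatically implied and adds nothing. For instance, for $(s,t)=(4,2)$ the spectrum $\{9^{[1]},3^{[9]},1^{[6]},(-1)^{[3]},(-3)^{[13]}\}$ on $32$ vertices passes all of your checks. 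The same ambiguity (now between $-1$ and $s-t-1$ as eigenvalues of $C$) blocks the converse; only in the degenerate case $s=t$ do the two roots coincide and your argument close. The missing ingredient is exactly the equation you list but never invoke: by \eqref{eq:CNB}, if $Bx=\theta x$ with $x\perp j$ then $C(Nx)=(s-t-2-\theta)Nx$, and $Nx\neq 0$ unless $\theta$ is a restricted eigenvalue $s-1$ or $-t-1$ of $A$. Together with its dual, this gives a multiplicity-preserving bijection between the non-restricted eigenspaces of $B$ and $C$ that pins down a \emph{single} image eigenvalue; this is the Cameron--Goethals--Seidel correspondence the paper's proof runs on.

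Two further corrections to your last paragraph. The identity you state is wrong: from \eqref{eq:C^2N}, $C^{2}-(s-2)C-(s-1)I$ equals $t\bigl((s-1)I+J-C\bigr)-NN^{\top}$, not $tJ-NN^{\top}$ (and $tJ-NN^{\top}$ cannot be positive semidefinite, since $NN^{\top}\succeq 0$ has rank much larger than $1$). Moreover, even with the corrected identity, positive semidefiniteness of $(C-(s-1)I)(C+I)$ is equivalent to every eigenvalue of $C$ being $s-1$ or at most $-1$, which is essentially the statement you are trying to prove; without the eigenvector correspondence from \eqref{eq:CNB} to exclude the eigenvalue $s-t-1$ of $C$ on the $st$-eigenspace of $NN^{\top}$, the PSD route is circular. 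Your trace computation $\operatorname{tr}\bigl(C^{2}-(s-2)C-(s-1)I\bigr)=0$ is fine (it only uses that $C$ is $(s-1)$-regular on $(t+1)s$ vertices), so once positive semidefiniteness is actually established the endgame works; but establishing it is the whole content of the lemma.
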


\begin{proof}
First, note that the disjoint union of $t+1$ $s$-cliques is characterized by having spectrum $\{s-1^{[t+1]},-1^{[(t+1)(s-1)]}\}$.
Next, we note that the multiplicities of a regular connected graph with (at most) four distinct eigenvalues follow from the number of vertices and the eigenvalues \cite{VANDAM1995139}.
The above matrix equations for the subconstituents (in particular \eqref{eq:CNB}) relate the so-called restricted eigenspaces of $C$ and $B$, as already observed by Cameron, Goethals, and Seidel \cite{CAMERON1978257}. In general, if $x$ is an eigenvector of $B$ that is orthogonal to the all-ones vector $j$ with an eigenvalue $\theta$ that differs from the restricted eigenvalues $\theta_1$ and $\theta_2$ of $A$, then $Nx$ is an eigenvector of $C$ that is also orthogonal to $j$, with eigenvalue $\theta-\theta_1-\theta_2$. Similarly we have a dual statement.
Applying this with $\theta_1=s-1$ and $\theta_2=-t-1$ finishes the proof; we omit the calculations of the multiplicities. Finally, we note that the relation between the spectra of the first and second subconstituent has also been worked out explicitly using Jacobi's identity on complementary minors by De Caen \cite[Cor.~3]{DECAEN1998559}.
\end{proof}

This lemma implies that if $v$ is a regular point, then $C$ is indeed a disjoint union of $t+1$ $s$-cliques, and hence
\begin{equation}\label{eq:C^2}
C^2=(s-1)I+(s-2)C.
\end{equation}
Let us now consider a regular point $v$.
We will next derive properties of an incidence relation between the set of fibres $\cP$ (points) of $\cH$ and the set of vertices $\cL$ (blocks) in $C$. By combining \eqref{eq:B^2} with \eqref{eq:B^2N}, and \eqref{eq:C^2} with \eqref{eq:C^2N}, and using that $\sum_{i=1}^4 B_i=J-I$, we now obtain that  
\begin{align}
NN^{\top}&=tsI+t(J-I-C),\label{eq:NNT}\\
N^{\top}N&=(t+1)(B_3+I)+B+B_2\label{eq:NTN}.
\end{align}

\begin{lemma} Vertices in the same fibre of $\cH$ are adjacent to the same $t+1$ vertices in $C$. 
\end{lemma}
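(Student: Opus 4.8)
The plan is to read the statement directly off equation \eqref{eq:NTN}, interpreting $N^{\top}N$ entrywise. Recall that $N$ is the bipartite adjacency matrix between the first subconstituent (whose internal adjacencies are recorded by $C$) and the second subconstituent (carrying the scheme $\cH$). Hence, for vertices $x,y$ of $\G_2(v)$, the $(x,y)$ entry of $N^{\top}N$ counts exactly the vertices of $C$ that are adjacent to both $x$ and $y$. So the whole argument reduces to evaluating the right-hand side of \eqref{eq:NTN} on the diagonal and on a same-fibre pair.

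First I would extract the diagonal of \eqref{eq:NTN}. Since $B$, $B_2$, and $B_3$ all have zero diagonal while $I$ contributes $1$, each diagonal entry equals $t+1$; thus every vertex of $\G_2(v)$ is adjacent to exactly $t+1$ vertices of $C$, which already pins down the count ``$t+1$'' appearing in the statement. Next I would evaluate \eqref{eq:NTN} on an off-diagonal pair $(x,y)$, $x\neq y$, lying in a common fibre. By definition the fibres are the $t$-cliques of the distance-$3$ relation $B_3$, so $(B_3)_{xy}=1$, whereas $I_{xy}=(B)_{xy}=(B_2)_{xy}=0$ because the relations $I,B,B_2,B_3,B_4$ partition the pairs and $(x,y)$ belongs to $B_3$. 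Substituting yields $(N^{\top}N)_{xy}=t+1$; that is, $x$ and $y$ have exactly $t+1$ common neighbours in $C$.

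Finally I would combine these two facts by an elementary set-inclusion argument. Writing $X,Y\sbs V(C)$ for the neighbourhoods of $x$ and $y$ in $C$, we have $|X|=|Y|=t+1$ from the diagonal computation and $|X\cap Y|=t+1$ from the same-fibre computation; since $X\cap Y\sbs X$ and $|X\cap Y|=|X|$ force $X\cap Y=X$, and symmetrically $X\cap Y=Y$, we conclude $X=Y$, which is precisely the assertion. I do not expect any genuine obstacle here: the result is a direct consequence of the single identity \eqref{eq:NTN}, and the only point needing care is the bookkeeping that $I$, $B$, and $B_2$ all vanish on a same-fibre pair, which is immediate once one recalls that $\cH$ is an association scheme and that the fibres are exactly the classes of $B_3$.
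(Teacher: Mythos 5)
Your proof is correct and is exactly the paper's argument: the paper's entire proof is ``This follows from \eqref{eq:NTN}'', and you have simply unpacked that identity entrywise (diagonal gives $|X|=|Y|=t+1$, the same-fibre off-diagonal entry gives $|X\cap Y|=t+1$, hence $X=Y$). Nothing further is needed.
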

\begin{proof} This follows from \eqref{eq:NTN}.
\end{proof}

Thus, a vertex in $C$ is adjacent to either none or all of the vertices in a given fibre. We can thus write $N=M \otimes J_{1t}$, where $M^{\top}$ can be considered as the incidence matrix of the incidence relation $\cI$ between the set $\cP$ of fibres of $\cH$ and the set $\cL$ of vertices in $C$. Equations \eqref{eq:NNT}-\eqref{eq:NTN} now reduce to 
\begin{align}
MM^{\top}&=sI+(J-I-C),\label{eq:MMT}\\
M^{\top}M&=(t+1)I+\hat{B}\label{eq:MTM}.
\end{align}
These equations immediately imply the following.
\begin{lemma} The incidence relation $\cI$ is a $(t+1)$-net of order $s$. Each $s$-clique in $C$ is a parallel class of blocks and the collinearity graph of $\cI$ is $\hat{B}$.
\end{lemma}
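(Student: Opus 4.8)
The plan is to verify the three claims of the lemma directly from the two matrix equations \eqref{eq:MMT} and \eqref{eq:MTM}, together with the already-established structure of $C$ (a disjoint union of $t+1$ $s$-cliques) and $\hat B$ (a strongly regular Latin square graph on $s^2$ vertices). Recall that a $(t+1)$-net of order $s$ is exactly the incidence structure described combinatorially earlier in the section: a partial linear space with $s^2$ points and $(t+1)s$ blocks, where each point lies on $t+1$ blocks, each block contains $s$ points, the blocks fall into $t+1$ parallel classes each partitioning the point set, and two blocks from different classes meet in exactly one point. So the task is to extract each of these incidence-geometric properties from the algebra.

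First I would read off the row and column sums and the local incidence numbers from \eqref{eq:MMT}. Taking $M^{\top}$ as the point-by-block incidence matrix, equation \eqref{eq:MTM} shows that $M^{\top}M$ has diagonal entries $t+1$, which says each fibre (point) is incident to exactly $t+1$ blocks; dually, \eqref{eq:MMT} gives diagonal entries $s$, so each block is incident to exactly $s$ points. The off-diagonal entries of $M^{\top}M$ equal the entries of $\hat B$, which is a $0/1$ matrix; hence two distinct points lie on a common block if and only if they are adjacent in $\hat B$, and when they do, they lie on \emph{exactly one} common block. This is precisely the partial linear space condition. Symmetrically, the off-diagonal entries of $MM^{\top}$ equal the corresponding entries of $J-I-C$, which is $1$ when the two blocks lie in different $s$-cliques of $C$ and $0$ when they lie in the same clique; thus two blocks meet in at most one point (in one point if in different cliques, in no point if in the same clique).

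Next I would identify the parallel classes. The blocks in a common $s$-clique of $C$ are pairwise non-collinear in the dual sense just established (they share no point), so each $s$-clique gives a set of $s$ mutually disjoint blocks; since there are $s^2$ points and each such block covers $s$ of them, these $s$ blocks partition the point set, i.e.\ each $s$-clique is a parallel class. There are exactly $t+1$ such cliques, matching the $t+1$ classes required for a $(t+1)$-net, and each point meets exactly one block from each class (it meets $t+1$ blocks in total, at most one per class, and there are $t+1$ classes). Finally, the collinearity graph of $\cI$ has two points adjacent exactly when they share a block, which by the paragraph above is exactly adjacency in $\hat B$; so the collinearity graph is $\hat B$.

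I do not expect a genuine obstacle here: the content is the faithful translation of \eqref{eq:MMT}--\eqref{eq:MTM} into the net axioms, and every step is a direct reading of matrix entries. The one point requiring mild care is confirming that the $s$ blocks of a single clique truly partition (rather than merely fail to overlap) the $s^2$ points; this follows from a counting argument once disjointness is known, using that each block has size $s$ and the classes are genuinely parallel. One should also note the degenerate case $s=t$ flagged earlier in the section, where $\hat B$ is complete and the net specializes to an affine plane; the same equations still apply, so no separate argument is needed.
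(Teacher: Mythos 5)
Your proof is correct and matches the paper's intent: the paper offers no written proof beyond the remark that equations \eqref{eq:MMT} and \eqref{eq:MTM} ``immediately imply'' the lemma, and your argument is exactly the entry-by-entry reading of $MM^{\top}$ and $M^{\top}M$ that the authors leave implicit. The only substantive check beyond that reading --- that the $s$ pairwise disjoint blocks of an $s$-clique actually partition the $s^2$ points --- is handled correctly by your counting argument.
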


In the extremal case that $s=t$, note that $\cI$ is in fact an affine plane.

The above properties completely characterize pseudo-geometric strongly regular graphs with a regular point, as we shall see.
Given an association scheme $\cH$ with eigenmatrix as in  \eqref{eq:eigenmatrix}, a $(t+1)$-net $\cI$ of order $s$, and a bijection $\phi$ from the fibres of $\cH$ to the point set $\cP$ of $\cI$, we can define a graph $\G=\G(\cH,\cI,\phi)$. The vertex set of $\G$ is
\[
\{v\} \cup \cL \cup V(\cH),
\]
where $\cL$ is the set of blocks of $\cI$ and $v$ is a vertex of origin. The adjacencies in $\G$  are as follows:
\begin{enumerate}[(i)]
    \item $v$ is adjacent to all blocks in $\cL$; 
    \item two blocks in $\cL$ are adjacent if they are parallel in $\cI$; 
    \item a block $\ell \in \cL$ is adjacent to a vertex in $V(\cH)$ if the latter is in a fibre $W$ such that $\phi(W)$ is incident to $\ell$ in $\cI$; and 
    \item two vertices of $V(\cH)$ are adjacent if they are adjacent in the first relation $B$ of $\cH$.
\end{enumerate}

\begin{lemma} If $\phi$ is an isomorphism from the quotient graph $\hat{B}$ to the collinearity graph of $\cI$, then $\G(\cH,\cI,\phi)$ is a strongly regular graph with parameters $((s+1)(st+1),s(t+1),s-1,t+1)$. 
\end{lemma}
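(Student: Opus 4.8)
The plan is to show that the adjacency matrix $A$ of $\G=\G(\cH,\cI,\phi)$ satisfies the quadratic equation \eqref{eq:Asquared}, which characterizes strongly regular graphs with the stated parameters; together with a vertex count and a check that $A$ is regular of degree $s(t+1)$, this suffices. Writing $A$ in the block form \eqref{eq:Apartition} with respect to the partition $\{v\}\cup\cL\cup V(\cH)$, I would first read off the three submatrices directly from the construction: by (ii) the matrix $C$ is the parallelism graph on $\cL$, which is a disjoint union of $t+1$ $s$-cliques and hence satisfies \eqref{eq:C^2}; by (iv) the matrix $B$ is the first relation of $\cH$; and by (iii) we have $N=M\otimes J_{1t}$, where $M^{\top}$ is the point--block incidence matrix of $\cI$ read through $\phi$. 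The vertex count $1+(t+1)s+s^2t=(s+1)(st+1)$ and the constant row sums (degree $s(t+1)$ at $v$, at each block, and at each vertex of $\cH$) are immediate from the adjacency rules.

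Next I would decompose \eqref{eq:Asquared} into the block equations it encodes and dispatch the routine ones. The first block row of $A^2$ records only that $v$ has degree $s(t+1)$ and that $C$, $N$ have constant row and column sums, which is direct. The diagonal blocks \eqref{eq:C^2N} and \eqref{eq:B^2N} reduce, via $A^2=jj^\top+\cdots$, to the two Gram identities $MM^{\top}=sI+(J-I-C)$ and $M^{\top}M=(t+1)I+\hat{B}$: tensoring by $J_{1t}$ gives $NN^{\top}=t\,MM^{\top}$ and $N^{\top}N=(M^{\top}M)\otimes J_t$, and then \eqref{eq:C^2} together with the scheme identity \eqref{eq:B^2} closes both diagonal blocks exactly as in the derivation of \eqref{eq:NNT}--\eqref{eq:NTN}. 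The only place where the hypothesis on $\phi$ enters here is the identity $M^{\top}M=(t+1)I+\hat{B}$: the entry $(M^{\top}M)_{W,W'}$ counts blocks through $\phi(W)$ and $\phi(W')$, which for $W\neq W'$ is $1$ or $0$ according as these points are collinear in $\cI$ or not, and this agrees with $\hat{B}$ precisely because $\phi$ is an isomorphism from $\hat{B}$ to the collinearity graph of $\cI$.

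The main obstacle is the off-diagonal block \eqref{eq:CNB}, $CN+NB=(s-t-2)N+(t+1)J$, because $B$ mixes vertices within and across fibres and is not itself a tensor product. I would split this into $CN=J-N$ and $NB=(s-t-1)N+tJ$. The first is a pure net computation: since each point of $\cI$ lies on exactly one block of the parallel class containing $\ell$, the entry $(CM)_{\ell,W}$ equals $1-M_{\ell,W}$, so $CM=J-M$ and hence $CN=J-N$. For $NB$ I would first note, comparing the $B$-valency $(s-1)(t+1)$ with the valency $(s-1)(t+1)$ of $\hat{B}$ and using $B+B_2=\hat{B}\otimes J_t$, $B_4=\hat{B_4}\otimes J_t$, that every vertex has exactly one $B$-neighbour in each $\hat{B}$-adjacent fibre and none in its own fibre or in a $\hat{B_4}$-related fibre. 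Then $(NB)_{\ell,w'}$ counts the fibres on the block $\ell$ that are $\hat{B}$-adjacent to the fibre of $w'$, equivalently, via $\phi$, the points of $\ell$ collinear with $\phi(\mathrm{fibre}(w'))$. If $\phi(\mathrm{fibre}(w'))\in\ell$ these are the other $s-1$ points of $\ell$; if $\phi(\mathrm{fibre}(w'))\notin\ell$, the net axioms (two blocks of distinct classes meet in exactly one point, blocks of the same class are disjoint) force exactly $t$ of the $s$ points of $\ell$ to be collinear with it, where $s\ge t$ guarantees consistency. These two cases give precisely $NB=(s-t-1)N+tJ$, and adding $CN=J-N$ yields \eqref{eq:CNB}. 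Assembling all block equations recovers \eqref{eq:Asquared}, completing the proof.
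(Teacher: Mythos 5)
Your proof is correct and follows the same skeleton as the paper's: block-decompose $A$ as in \eqref{eq:Apartition}, observe that the hypothesis on $\phi$ is exactly what gives the Gram identity \eqref{eq:MTM} (while \eqref{eq:MMT} comes from the net axioms), tensor up to \eqref{eq:NNT}--\eqref{eq:NTN}, and close the diagonal blocks using \eqref{eq:B^2} and \eqref{eq:C^2}. The one genuine divergence is in the off-diagonal block \eqref{eq:CNB}: the paper obtains it by a purely algebraic manipulation --- multiplying \eqref{eq:NNT} by $N$ on the right and \eqref{eq:NTN} by $N$ on the left and equating the two expressions for $NN^{\top}N$ --- whereas you compute $CN=J-N$ and $NB=(s-t-1)N+tJ$ entrywise from the net axioms. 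Both routes ultimately rest on the same structural fact, namely that $B$ is a cover of $\hat{B}$, i.e.\ each vertex has \emph{exactly} one $B$-neighbour in every $\hat{B}$-adjacent fibre (the paper uses this in the guise of $N(B+B_2)=tNB$). Your justification of that fact by comparing the valency of $B$ with that of $\hat{B}$ only shows that the \emph{average} number of $B$-neighbours per adjacent fibre is one; to get exactly one in each you should appeal to the intersection numbers of $\cH$ (e.g.\ $p^1_{13}=0$ and $p^2_{13}=1$, which are determined by the eigenmatrix \eqref{eq:eigenmatrix}), equivalently to the matching structure between fibres recorded in the paper when it writes $B+B_2=\hat{B}\otimes J_t$. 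With that one step tightened, your direct combinatorial count is a perfectly valid, and arguably more transparent, substitute for the paper's identity-juggling.
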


\begin{proof}
Let $\G=\G(\cH,\cI,\phi)$. The bijection $\phi$ allows us to define the incidence matrix $M$ between blocks and fibres, i.e., we let $M_{\ell,W}=1$ precisely when $\ell$ is incident to $\phi(W)$. Then \eqref{eq:MMT} holds because of the properties of $\cI$, with $C$ as in \eqref{eq:C^2}, and \eqref{eq:MTM} holds because $\phi$ is an isomorphism. 

We then let $N=M\otimes J_{1t}$ as before, where we can arrange the columns of $N$ within fibres such that it matches the adjacency matrices of $\cH$. Then \eqref{eq:NNT} and \eqref{eq:NTN} follow. By combining these with \eqref{eq:B^2} and \eqref{eq:C^2}, we obtain \eqref{eq:C^2N} and \eqref{eq:B^2N}. 

In the meantime, we have constructed the entire adjacency matrix $A$ of $\G$ as in \eqref{eq:Apartition}. For completeness, we note that $\G$ is regular, and that $C,N,$ and $B$ have the appropriate row and column sums for $\G$ to be strongly regular with the stated parameters. In fact, all that remains to be shown is \eqref{eq:CNB}.

If we now multiply \eqref{eq:NNT} by $N$ from the right, and \eqref{eq:NTN} by $N$ from the left, compare the obtained right hand sides, and use that $N(B+B_2)=tNB$ and $N(B_3+I)=tN$, then \eqref{eq:CNB} follows.

Thus, it follows that \eqref{eq:Asquared} holds, which finishes the proof.
\end{proof}

As a result of the above lemmata, we now obtain the following characterization of pseudo-geometric strongly regular graphs  having a regular point. 

\begin{theorem}\label{thm:characterizewithscheme}
Let $s \geq t$. Then $\Gamma$ is a strongly regular graph with parameters $((ts+1)(s+1),(t+1)s,s-1,t+1)$ with a regular point if and only if $\G = \G(\cH,\cI,\phi)$, where
\begin{enumerate}[(i)]
  \item $\cH$ is an association scheme with eigenmatrix \eqref{eq:eigenmatrix}; 
\item $\cI$ is a $(t+1)$-net of order $s$;
\item $\phi$ is an isomorphism from the quotient graph $\hat{B}$ to the collinearity graph of $\cI$.
\end{enumerate}
\end{theorem}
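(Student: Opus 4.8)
The plan is to prove Theorem \ref{thm:characterizewithscheme} by establishing both directions, most of which has already been assembled in the preceding lemmata. The theorem is essentially a bookkeeping statement asserting that the forward implication (regular point $\Rightarrow$ the data $(\cH,\cI,\phi)$ exist with the stated properties) and the reverse implication (that data $\Rightarrow$ strongly regular graph with a regular point) are both consequences of work already done. So the proof should be short, and the real content is verifying that nothing has been left unchecked.

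For the reverse direction, I would simply invoke the final lemma preceding the theorem: given $\cH$, $\cI$, and $\phi$ as in (i)--(iii), that lemma already shows $\G(\cH,\cI,\phi)$ is strongly regular with parameters $((s+1)(st+1),s(t+1),s-1,t+1)$. The only thing that lemma does \emph{not} explicitly state is that the origin vertex $v$ is a regular point of the constructed graph. So the one genuinely new step here is to observe that, by the very construction, the second subconstituent $\Gamma_2(v)$ carries the association scheme $\cH$ (its adjacencies are defined via relation $B$ of $\cH$, and the fibres, distance-$2$ and distance-$3$ relations are all inherited), which is precisely the definition of $v$ being a regular point. I would also note that the parameter reordering $((ts+1)(s+1),(t+1)s,s-1,t+1)$ in the theorem statement matches $((s+1)(st+1),s(t+1),s-1,t+1)$ from the lemma.

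For the forward direction, I would start from a strongly regular graph $\G$ with the stated parameters having a regular point $v$, and assemble the data as follows. First, by definition of a regular point, the second subconstituent $\Gamma_2(v)$ generates the imprimitive $4$-class scheme $\cH$ with eigenmatrix \eqref{eq:eigenmatrix}, giving (i). Next, Lemma \ref{lem:spectrum} together with the matrix equations \eqref{eq:NNT}--\eqref{eq:NTN} and the subsequent lemmata show that $C$ is a disjoint union of $t+1$ $s$-cliques, that $N$ factors as $M\otimes J_{1t}$, and that the resulting incidence relation $\cI$ defined by $M^\top$ is a $(t+1)$-net of order $s$, giving (ii). The bijection $\phi$ from the fibres of $\cH$ to the point set $\cP$ of $\cI$ is the one implicit in writing $M$, and the lemma establishing that the collinearity graph of $\cI$ equals $\hat{B}$ shows that $\phi$ is an isomorphism from $\hat{B}$ to that collinearity graph, giving (iii). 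Finally I would confirm $\G=\G(\cH,\cI,\phi)$ by checking that the four adjacency rules (i)--(iv) in the construction reproduce exactly the partitioned adjacency matrix \eqref{eq:Apartition} of $\G$, which follows since $v$ is joined to all blocks, the blocks form the cliques of $C$, the block--fibre adjacencies are governed by $N=M\otimes J_{1t}$, and the fibre--fibre adjacencies are relation $B$.

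The main obstacle, such as it is, lies in the forward direction: one must be careful that the bijection $\phi$ extracted from the factorization $N = M\otimes J_{1t}$ genuinely intertwines $\hat B$ with the collinearity graph of the net, rather than merely being an abstract bijection between vertex sets. This is where \eqref{eq:MTM}, $M^\top M=(t+1)I+\hat B$, does the real work, since it identifies the off-diagonal incidence structure of $M^\top M$ with the adjacency matrix $\hat B$, and I would lean on the lemma stating that the collinearity graph of $\cI$ is $\hat B$ to conclude. The remaining risk is purely organizational — ensuring that the clique-decomposition of $C$, the fibre structure from $B_3=I\otimes J_t-I$, and the tensor factorization of $N$ are all aligned under a single consistent ordering of vertices so that the reconstructed graph literally equals $\G$ and not merely an isomorphic copy; I would remark that such a reordering exists because every identification made above is canonical up to the choice of labelling within fibres and parallel classes.
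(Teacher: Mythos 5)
Your proposal is correct and follows essentially the same route as the paper, which states the theorem as a direct consequence of the preceding lemmata (Lemma \ref{lem:spectrum} and the subsequent lemmata on $N=M\otimes J_{1t}$, the net $\cI$, and the construction $\G(\cH,\cI,\phi)$) without a separate proof. You even make explicit the one point the paper leaves implicit, namely that the origin vertex of the constructed graph is a regular point because its second subconstituent carries (and generates, since $B_2$ and $B_4$ are distinguished by common-neighbour counts and $B_3$ is the distance-$3$ relation) the scheme $\cH$.
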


Note that in the extremal case that $s=t$, both the collinearity graph and the quotient graph are complete graphs, hence in this case $\phi$ can be any bijection. In that case, $\cI$ is an affine plane of order $s$, and $\cH$ is a $3$-class association scheme generated by an antipodal $s$-cover of a complete graph on $s^2$ vertices. We will discuss this in more detail in Section \ref{sec:2nbhdextensions}.

\subsection{New graphs from old ones}\label{sec:newgraphs}

Using Theorem \ref{thm:characterizewithscheme}, we can now give constructions for new strongly regular graphs cospectral with a given strongly regular graph $\G$ with a regular point $v$ as follows: we may find $\cH,\cI,\phi$ as in the theorem and obtain $\widetilde{\G} = \G(\cH,\cI,\phi')$ where $\phi'$ is obtained from $\phi$ by composition with any automorphism of the collinearity graph of $\cI$. The new graph $\widetilde{\G}$ will be strongly regular with the same parameters as $\Gamma$, but not necessarily isomorphic to $\Gamma$. 

More specifically, let $\sigma$ be an automorphism of the collinearity graph of $\cI$, and define the composition $\phi^{\sigma}=\sigma \circ \phi$. Since $\sigma$ is a automorphism, $\phi^{\sigma}$ is also an isomorphism from the quotient graph $\hat{B}$ to the collinearity graph of $\cI$. Thus, $\Gamma^{\sigma} := \G(\cH,\cI,\phi^{\sigma})$ is also strongly regular and cospectral with $\Gamma$.

\begin{corollary}\label{cor:newgrssigma0} For any automorphism $\sigma$ of the collinearity graph of $\cI$, $\Gamma^{\sigma}$ is a strongly regular graph cospectral with $\Gamma$.
\end{corollary}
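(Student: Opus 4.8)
The plan is to reduce the statement directly to the construction lemma immediately preceding Theorem \ref{thm:characterizewithscheme}, which guarantees that $\G(\cH,\cI,\psi)$ is strongly regular with parameters $((s+1)(st+1),s(t+1),s-1,t+1)$ whenever $\psi$ is an isomorphism from the quotient graph $\hat{B}$ to the collinearity graph of $\cI$. Since $\Gamma=\G(\cH,\cI,\phi)$ arises in precisely this way, the only thing that needs to be checked is that $\phi^{\sigma}=\sigma\circ\phi$ is again such an isomorphism.

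First I would observe that $\phi$ is by hypothesis an isomorphism from $\hat{B}$ onto the collinearity graph of $\cI$, and that $\sigma$ is an automorphism of that same collinearity graph. Composing an isomorphism with an automorphism of its target yields another isomorphism with the same domain and codomain, so $\phi^{\sigma}$ is again an isomorphism from $\hat{B}$ to the collinearity graph of $\cI$. This is the only substantive verification, and it is purely formal.

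Next I would invoke the construction lemma with $\psi=\phi^{\sigma}$ to conclude that $\Gamma^{\sigma}=\G(\cH,\cI,\phi^{\sigma})$ is a strongly regular graph with parameters $((s+1)(st+1),s(t+1),s-1,t+1)$, that is, the same parameters as $\Gamma$. Finally, since the spectrum of a strongly regular graph is determined by its parameters, $\Gamma$ and $\Gamma^{\sigma}$ are cospectral, which is exactly the assertion.

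The main (and essentially only) obstacle is conceptual rather than technical: one must be careful that the construction $\G(\cH,\cI,\cdot)$ depends on its third argument solely through the property of being an isomorphism between the two relevant graphs, so that replacing $\phi$ by $\phi^{\sigma}$ keeps us inside the hypotheses of the lemma. Once this is clear, everything follows immediately from the proof of Theorem \ref{thm:characterizewithscheme}, and no new calculation is required.
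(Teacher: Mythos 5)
Your proposal is correct and follows essentially the same route as the paper: the paper's proof is a one-line appeal to Theorem \ref{thm:characterizewithscheme}, with the observation that $\phi^{\sigma}=\sigma\circ\phi$ remains an isomorphism from $\hat{B}$ to the collinearity graph of $\cI$ already made in the preceding paragraph. Your only difference is invoking the construction lemma directly rather than the theorem that packages it, which is immaterial.
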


\begin{proof}
  This follows from Theorem \ref{thm:characterizewithscheme}.
\end{proof}

The next question is whether the new graphs are really new, or just isomorphic copies of the original one.

To describe the change from $\G$ to $\G^{\sigma}$ more concretely is that a block $\ell \in \cL$ is now adjacent to all vertices of the fibre $W$ if $\sigma(\phi(W))$ is incident to $\ell$ in $\cI$. Thus, essentially, the fibres are rearranged in how they are connected to the first subconstituent (and adjacencies within the subconstituents remain the same). This implies that if $\sigma$ (as a permutation of points of $\cI$) is such that blocks are mapped to blocks, then essentially nothing changes to $\G$. More formally, we have the following.


\begin{lemma}\label{lem:cosets}
Let $\Aut(\phi(\hat{B}))$ be the automorphism group of the collinearity graph of $\cI$. Let $G < \Aut(\phi(\hat{B}))$ be the collineation group of $\cI$. If $\sigma \in G$, then $\Gamma^{\sigma}$ is isomorphic to $\Gamma$. Further, if 
$\sigma$ and $\tau$ are in the same double coset of $G$, then $\Gamma^{\sigma}$ is isomorphic to $\Gamma^{\tau}$. 
\end{lemma}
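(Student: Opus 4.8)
The plan is to exhibit an explicit isomorphism for the first assertion and then bootstrap it to the second. The key point to exploit is the distinction, emphasised just before the statement, between a general element of $\Aut(\phi(\hat B))$, which only permutes the point set $\cP$ of $\cI$ (equivalently, the fibres of $\cH$), and a \emph{collineation} $\sigma\in G$, which carries the whole incidence structure to itself: it permutes the points $\cP$, permutes the blocks $\cL$, and preserves both incidence and parallelism. Only this richer action is available for rearranging the blocks, and it is exactly what makes the construction insensitive to $\sigma$.

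First I would prove that $\sigma\in G$ implies $\G^{\sigma}\cong\G$ by writing down the map $\psi\colon\G^{\sigma}\to\G$ that fixes the origin $v$ and every vertex of $V(\cH)$, and sends each block $\ell\in\cL$ to $\sigma^{-1}(\ell)$ (using the action on blocks of the collineation $\sigma^{-1}$, which lies in $G$ since $G$ is a group). This $\psi$ is a bijection of the common vertex set $\{v\}\cup\cL\cup V(\cH)$, and I would check that it respects each of the four adjacency types (i)--(iv). Types (i) and (iv) are immediate, as $\psi$ fixes $v$, acts as the identity on $V(\cH)$, and permutes $\cL$ among itself; type (ii) follows because $\sigma^{-1}$, being a collineation, preserves parallelism of blocks. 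The only computation is type (iii): in $\G^{\sigma}$ a block $\ell$ is joined to a fibre $W$ iff $\sigma(\phi(W))$ is incident to $\ell$ in $\cI$, whereas in $\G$ the block $\psi(\ell)=\sigma^{-1}(\ell)$ is joined to $W$ iff $\phi(W)$ is incident to $\sigma^{-1}(\ell)$; these conditions agree precisely because $\sigma$ preserves incidence, that is, $\phi(W)$ is incident to $\sigma^{-1}(\ell)$ iff $\sigma(\phi(W))$ is incident to $\ell$. Hence $\psi$ is an isomorphism.

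For the second assertion I would observe that $\G^{\sigma}=\G(\cH,\cI,\phi^{\sigma})$ is itself an instance of the construction with base bijection $\phi^{\sigma}=\sigma\circ\phi$, the same net $\cI$ and quotient graph $\hat B$ being attached. Applying the first assertion to $\G^{\sigma}$ in place of $\G$, any $g\in G$ gives $(\G^{\sigma})^{g}\cong\G^{\sigma}$, and by definition $(\G^{\sigma})^{g}=\G(\cH,\cI,g\circ\phi^{\sigma})=\G(\cH,\cI,(g\sigma)\circ\phi)=\G^{g\sigma}$. Thus $\G^{g\sigma}\cong\G^{\sigma}$ for every $g\in G$, so up to isomorphism $\sigma\mapsto\G^{\sigma}$ is constant on each coset $G\sigma$; writing $\tau=g\sigma$ with $g\in G$ yields $\G^{\tau}\cong\G^{\sigma}$ whenever $\tau$ and $\sigma$ lie in the same coset, as claimed. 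The first assertion is then the special case $\tau=\mathrm{id}$.

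I do not expect a genuine obstacle, since every adjacency outside the block--fibre incidence is manifestly preserved. The one place to be careful is to keep the two distinct actions of $\sigma$ (on points $\cP$ and on blocks $\cL$) straight, and to apply the incidence identity in the correct direction: this is precisely where $\sigma\in G$, rather than merely $\sigma\in\Aut(\phi(\hat B))$, is indispensable.
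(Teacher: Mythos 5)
Your proof is correct and follows essentially the same route as the paper: the paper's (much terser) proof likewise observes that a collineation $\sigma\in G$ induces a vertex map between $\Gamma$ and $\Gamma^{\sigma}$ sending edges to edges, and deduces the coset statement by writing $\tau=g\sigma$ with $g\in G$. Your version merely makes explicit the block action of the collineation and the verification of the four adjacency types, which the paper leaves to the reader.
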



\begin{proof}
  From $\sigma$, we get a mapping of the vertices of $\Gamma$ to the vertices of $\Gamma^{\sigma}$. This mapping is an isomorphism if $\sigma \in G$, since edges are mapped to edges. If $\sigma,\tau \in G \alpha G$, then $\sigma = g_1 \tau g_2$ for some $g_1,g_2\in G$, and the claim follows.
\end{proof}

\subsection{A cospectral mate for the Hermitian generalized quadrangle}\label{sec:HGQ}

For $s>t \geq 2$, the known generalized quadrangles of order $(s,t)$ with a regular point all have $s=q^2$ and $t=q$, for prime powers $q$. They are the Hermitian quadrangles $H(3,q^2)$, and other duals of generalized quadrangles of type $T_3(O)$, where $O$ is an ovoid \cite[p.~34]{PT}.
When $O$ is not an ellipic quadric (i.e., the dual is not the Hermitian quadrangle), then $q$ must be even. The only known other ovoids are the Tits ovoids, which exist for $q=2^h$, $h$ odd and $h>2$ \cite[p.~35]{PT}.

Bamberg, Monzillo, and Siciliano \cite{BAMBERG2021281} construct an imprimitive $5$-class association scheme on the second subconstituent of 
$H(3,q^2)$, for odd $q$ (we note that for $q=2$, the parameters of their scheme are not feasible). Their scheme is a fission scheme of the above $4$-class scheme, where $B_4$ is fissioned (and hence it has the same quotient scheme). They prove (with reference to a very technical result) that in this case $\hat{B}$ is isomorphic to the bilinear forms graph $H_q(2,2)$. Pavese (personal communication, April 12, 2022) showed that this is the case for all prime powers $q$ using the well-known dual description of the Hermitian polar space.
 
The bilinear forms graph $H_q(2,2)$ is the subgraph of the Grassmann graph $J_q(4,2)$ induced on the vertices that intersect a fixed plane ($2$-space) $\pi$ trivially (thus, this strongly regular graph is itself the second subconstituent of a strongly regular graph). It has automorphism group $\PGaL(4,q)_{\pi}\cdot 2$ (where subscript-$\pi$ denotes the  stabilizer of $\pi$). It has two types of maximal cliques (like the Grassmann graph), every edge being in exactly one of each type. One kind consists of the planes containing a given line, and the other kind consists of the planes contained in a given $3$-dimensional space. The index-two subgroup $\PGaL(4,q)_{\pi}$ fixes the type of a clique, whereas its (nontrivial) coset interchanges the two types of cliques. For details, see \cite[\S 9.5A]{BCN}. 

Let us now indeed consider the collinearity graph of $H(3,q^2)$, and view it as $\G = \G(\cH,\cI,\phi)$.
Then $\cI$ is isomorphic to the incidence relation of vertices and one type of cliques of the bilinear forms graph $H_q(2,2)$.
Let $\sigma$ be an automorphism of the corresponding bilinear forms graph $H_q(2,2)$ that fixes the type of a clique. Then $\sigma$ is a collineation, so by Lemma \ref{lem:cosets}, 
$\G^{\sigma}$ is isomorphic to $\G$.

On the other hand, if $\sigma$ is an automorphism that interchanges the two types of cliques, then the vertices in the second subconstituent $\G^{\sigma}_2(v)$ are not contained in any maximal clique (of size $q^2+1$) because the other type of cliques in the quotient graph do not correspond to lines of the generalized quadrangle. It follows that we can construct one, and by Lemma \ref{lem:cosets} only one, new graph in this way.

\begin{prop}
The Hermitian generalized quadrangle $H(3,q^2)$ has a cospectral strongly regular graph that is not geometric.
\end{prop}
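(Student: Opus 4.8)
The plan is to apply the construction machinery of Theorem \ref{thm:characterizewithscheme} together with Lemma \ref{lem:cosets} to the collinearity graph of $H(3,q^2)$, and exhibit a specific automorphism $\sigma$ that produces a cospectral graph failing to be geometric. First I would fix the data $\G = \G(\cH,\cI,\phi)$ realizing the Hermitian quadrangle, so that $\cH$ is the $4$-class scheme on $\G_2(v)$ and $\cI$ is the $(q+1)$-net of order $q^2$ whose collinearity graph $\hat{B}$ is the bilinear forms graph $H_q(2,2)$ (valid for odd $q$, as noted via \cite{BAMBERG2021281}). The blocks $\cL$ of $\cI$ correspond to one of the two types of maximal cliques of $H_q(2,2)$; call it type I. The key structural input is that $H_q(2,2)$ has two clique types interchanged by the nontrivial coset of $\PGaL(4,q)_\pi$, and that \emph{only} the type-I cliques arise from lines of the generalized quadrangle.

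Next I would take $\sigma \in \Aut(H_q(2,2))$ lying in the coset that swaps the two clique types, and form $\G^\sigma = \G(\cH,\cI,\phi^\sigma)$ with $\phi^\sigma = \sigma \circ \phi$. By Corollary \ref{cor:newgrssigma0}, $\G^\sigma$ is strongly regular and cospectral with $\G$, so the spectral half of the claim is immediate. The substance is to show $\G^\sigma$ is not geometric, i.e.\ not the collinearity graph of any $\GQ(q^2,q)$. A geometric graph with these parameters must contain a \emph{spread} of maximal cliques of size $q^2+1$ (the lines), each vertex lying in exactly $q+1$ of them. So I would argue that the second subconstituent $\G^\sigma_2(v) = V(\cH)$ contains no such $(q^2+1)$-clique. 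The maximal cliques of $\G^\sigma$ meeting $V(\cH)$ are forced, through the incidence in (iii) of the construction, to come from cliques of the quotient $\hat{B}$; after twisting by $\sigma$, the blocks of $\cL$ now index type-II cliques of $H_q(2,2)$. Since type-II cliques do not correspond to lines of the quadrangle, the vertices in a fibre, together with the blocks now incident to them, cannot be completed to a maximal clique of the geometric size. I would make this precise by computing the maximal clique sizes through a vertex in $V(\cH)$ in $\G^\sigma$ and showing they fall short of $q^2+1$, or equivalently that the required line-spread is destroyed by the swap.

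The point to stress, and the reason only one new graph arises, is that Lemma \ref{lem:cosets} shows $\G^\sigma$ depends only on the coset $G\sigma$, where $G = \PGaL(4,q)_\pi$ is the collineation group fixing clique type. Since the type-swapping automorphisms form a single coset, any two such choices of $\sigma$ give isomorphic $\G^\sigma$, and any $\sigma \in G$ gives back $\G$ itself. So there is exactly one graph to analyze, and establishing its non-geometricity suffices.

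The main obstacle will be the clean identification of maximal cliques in $\G^\sigma$ and the proof that the absence of type-I incidences really obstructs a $(q^2+1)$-clique in the second subconstituent. The delicate part is ruling out that some \emph{other} collection of vertices (not arising in the obvious way from a single quotient clique) might still assemble into a large clique realizing a line; this requires using the full structure of $\cH$ and the net $\cI$, specifically that adjacencies within $V(\cH)$ are governed by the fixed relation $B$ and that each fibre is a $t$-clique via $B_3$, so the only candidate large cliques are those built from a block together with a parallel class and a fibre. Once the maximal clique structure of $\G^\sigma$ is pinned down, the geometric obstruction — that no spread of $(q^2+1)$-cliques through a second-subconstituent vertex survives the type swap — follows, completing the proof.
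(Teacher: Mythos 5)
Your proposal follows essentially the same route as the paper: twist $\phi$ by a type-swapping automorphism of $H_q(2,2)$, invoke Corollary \ref{cor:newgrssigma0} for cospectrality and Lemma \ref{lem:cosets} for uniqueness up to isomorphism, and rule out geometricity by showing vertices of $\G^{\sigma}_2(v)$ lie on no $(q^2+1)$-clique because only type-I cliques of the quotient correspond to lines. One small caution: what a geometric graph must have is $q+1$ lines (Delsarte cliques of size $q^2+1$) through \emph{every} vertex, not a spread in the technical sense; but that is exactly the property your argument actually uses, so the proof stands as the paper's does.
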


\section{A problem by Gardiner, Godsil, Hensel, and Royle}
\label{sec:2nbhdextensions}
 
We will next focus on the case that $s=t$, so that the second subconstituents with respect to a regular point are antipodal distance-regular covers of complete graphs. This case was studied already by Gardiner, Godsil, Hensel, and Royle \cite{GARDINER1992161}. In particular, they asked the question which strongly regular graphs have a vertex such that the second subconstituent is an antipodal distance-regular cover.
 
\subsection{Partial solution of the Gardiner et al. problem}

Gardiner et al.~\cite{GARDINER1992161} characterized strongly regular graphs where {\em all} vertices have second subconstituents being antipodal distance-regular graphs. These consist of the Moore graphs, the complements of the triangular graphs, and certain other more interesting complements of so-called semi-partial geometries, such as the collinearity graph of the symplectic generalized quadrangle $W(q)$.

Suppose now that $\G$ is a strongly regular $(\nu,k,\lambda,\mu)$-graph with {\em at least one} vertex such the second subconstituent with respect to this vertex $x$ is an antipodal distance-regular $r$-cover of a complete graph $K_n$ with (generic) intersection array  $\{n-1,n-2-a_1, 1; 1, c_2, n-1\}$ and distinct eigenvalues $n-1,-1,\theta_1,\theta_2$. It follows from arguments such as in the proof of Lemma \ref{lem:spectrum} that $\theta_1$ and $\theta_2$ are also the nontrivial eigenvalues of $\G$, and that the only possible eigenvalues of the first subconstituent $\G_1(x)$ are $\lambda$, $\theta_1$, $\theta_2$, and $\theta_1+\theta_2+1$, where that latter has multiplicity $n-1$ (unless it equals $\lambda$, in which case the joint multiplicity equals $n$). Moreover, $\theta_1+\theta_2=a_1-c_2$ and $n-2-a_1=(r-1)c_2$. Using this, we can characterize some of the above examples (assuming just one second subconstituent being a distance-regular cover). 

Indeed, it follows easily that if $\lambda=0$, then $\G$ is a Moore graph. If $\theta_1=1$, then the second subconstituent is a complete bipartite graph minus a matching, and $\G$ must be the complement of a triangular graph. If $\G_1(x)$ is a disjoint union of cliques, then it follows (by using the standard equations for the parameters such as the above) that $\G$ is pseudo-geometric with the parameters of the collinearity graph of a generalized quadrangle $\GQ(\sqrt{n},\sqrt{n})$.

In the latter case, we can apply our main theorem to the case that $s=t$ to give a  characterization of all graphs containing at least one regular point in this setting, thus partially resolving an open problem in \cite{GARDINER1992161}. 

Throughout this section, we will thus consider a strongly regular graph $\Gamma$ with parameters \[ (q^3 +q^2 + q + 1, q^2 + q, q-1, q +1),\] with a regular point $v$; in this case, $\Gamma_2(v)$ is a distance-regular, antipodal $q$-cover of $K_{q^2}$, whose intersection array is \[ \{q^2 -1, q^2 -q, 1; 1, q, q^2-1\}.\] 
See Figure \ref{fig:distpart-wq} for a depiction of the distance partition of $\Gamma$ with respect to $v$. 

\begin{figure}[htb]
    \centering
    \includegraphics{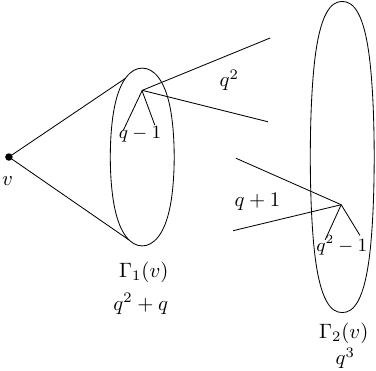} \hspace{20pt}
       \includegraphics{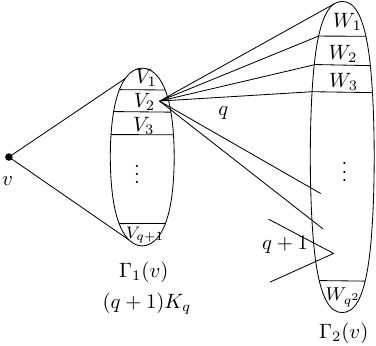}
    \caption{\textit{Left: } The distance partition of $\Gamma$ with respect to $v$. \textit{Right: } The refined distance partition of $\Gamma$ with respect to the regular point $v$. 
    \label{fig:distpart-wq} }
\end{figure}

By applying our main theorem, we obtain the following corollary: 

\begin{corollary}\label{thm:2ndnbhd}
$\Gamma$ is a strongly regular graph with parameters $(q^3 +q^2 + q + 1, q^2 + q, q-1, q +1)$ with a regular point if and only if $\G = \G(\cH,\cI,\phi)$, where
\begin{enumerate}[(i)]
  \item  $\cH$ is a distance regular graph\footnote{Formally, $\cH$ is a metric association scheme generated by a distance-regular graph} with intersection array
$\{q^2 -1, q^2 -q, 1; 1, q, q^2-1\} $; 
\item $\cI$ is an affine plane of order $q$;
\item $\phi$ is a bijection from the fibres of $\cH$ to the points of $\cI$.
\end{enumerate}
\end{corollary}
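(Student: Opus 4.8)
The plan is to derive Corollary~\ref{thm:2ndnbhd} as the specialization $s=t=q$ of the main theorem, Theorem~\ref{thm:characterizewithscheme}, and then translate each of the three structural conditions into its degenerate form. First I would substitute $s=t=q$ into the strongly regular parameters $((ts+1)(s+1),(t+1)s,s-1,t+1)$, obtaining directly $((q^2+1)(q+1),(q+1)q,q-1,q+1)=(q^3+q^2+q+1,q^2+q,q-1,q+1)$, so the parameter set matches verbatim. Theorem~\ref{thm:characterizewithscheme} then gives a biconditional characterization $\G=\G(\cH,\cI,\phi)$, and the task reduces to verifying that conditions (i)--(iii) of the theorem become conditions (i)--(iii) of the corollary under $s=t$.

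For condition (ii), the excerpt already records that a $(t+1)$-net of order $s$ with $s=t=q$ is an affine plane of order $q$ (this is noted both after Equation~\eqref{eq:MTM} and in the remark following the theorem), so the net condition collapses exactly as claimed. For condition (iii), I would invoke the degeneracy observations already made in the excerpt: when $s=t$ the quotient graph $\hat{B}$ is a complete graph on $s^2=q^2$ vertices, and the collinearity graph of an affine plane of order $q$ (whose point set also has $q^2$ points, with the net relation becoming a $1$-class scheme) is likewise complete. Since any bijection between the vertex sets of two complete graphs on the same number of vertices is an isomorphism, the requirement in Theorem~\ref{thm:characterizewithscheme}(iii) that $\phi$ be an isomorphism $\hat{B}\to$ (collinearity graph of $\cI$) imposes no constraint beyond $\phi$ being a bijection from the fibres of $\cH$ to the points $\cP$ of $\cI$; this is exactly condition (iii) of the corollary.

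The one genuinely substantive step is condition (i): I must argue that an association scheme $\cH$ with eigenmatrix \eqref{eq:eigenmatrix} specialized to $s=t=q$ is precisely a metric $3$-class scheme generated by a distance-regular antipodal $q$-cover of $K_{q^2}$ with intersection array $\{q^2-1,q^2-q,1;1,q,q^2-1\}$. Here I would use the remark in the excerpt that when $s=t$ the relation $B_4$ is empty, so the putative $4$-class scheme degenerates to a $3$-class scheme on the three relations $B=B_1$, $B_2$, and $B_3$. Setting $s=t=q$ in $P$ and deleting the (now void) last column and the corresponding row, I would read off that $B$ is regular of valency $(s-1)(t+1)=q^2-1$, that $B_3=I\otimes J_t-I$ is a disjoint union of $q$-cliques (the fibres, giving antipodality with $q$ vertices per fibre), and that the quotient $\hat{B}$ on the $s^2=q^2$ fibres is complete $K_{q^2}$. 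Combining the $B$-valency, the fibre size $q$, and the complete quotient on $q^2$ classes identifies $\cH$ as an antipodal $q$-cover of $K_{q^2}$ on $q^3$ vertices; the metricity (i.e.\ that the distance-$i$ relation equals $B_i$, so that $B$ generates the scheme) follows since the three nontrivial relations correspond to the three distances $1,2,3$ in the cover. Finally I would recover the intersection array from the intersection numbers encoded in the reduced $P$: the $c_1=1,b_1=q^2-q$ from $p^{\,\cdot}_{11}$ entries, the middle $a_2,b_2,c_2$ from the $B_2$ column, and the antipodal data $c_3=q^2-1$ from $B_3$ being a $q$-clique, matching $\{q^2-1,q^2-q,1;1,q,q^2-1\}$ exactly. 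The main obstacle is making this last identification precise and bidirectional: I must confirm not only that the eigenmatrix forces this intersection array, but conversely that a distance-regular antipodal $q$-cover of $K_{q^2}$ with that array generates a scheme with eigenmatrix \eqref{eq:eigenmatrix} at $s=t=q$ (so that the ``if'' direction of the corollary is covered by the ``if'' direction of the theorem); this amounts to checking that the eigenvalues and multiplicities computed from the array agree with the $s=t$ specialization of $P$, a routine but necessary verification that I would present as the crux of the argument.
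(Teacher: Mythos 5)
Your proposal is correct and follows essentially the same route as the paper: the authors obtain the corollary purely by specializing Theorem~\ref{thm:characterizewithscheme} to $s=t=q$, noting (as in the remark after that theorem) that the $(t+1)$-net becomes an affine plane, that both $\hat{B}$ and the collinearity graph of $\cI$ become complete so $\phi$ may be any bijection, and that the scheme degenerates to the $3$-class metric scheme of an antipodal $q$-cover of $K_{q^2}$ with the stated intersection array. The eigenmatrix-versus-intersection-array check you flag as the crux is indeed left implicit in the paper (it is treated as routine), but it is the same identification the authors rely on.
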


In some sense, this answers the question by Gardiner et al. \cite{GARDINER1992161}. In the next section, we will however use this characterization to construct the actual examples they asked for.

\subsection{Pseudo-geometric graphs for the symplectic quadrangle}
\label{sec:construction}

Using Corollary \ref{thm:2ndnbhd}, we can now give constructions for new strongly regular graphs cospectral with a given strongly regular graph $\G$ with a regular point $v$ as follows: we may find $\cH,\cI,\phi$ as in Corollary \ref{thm:2ndnbhd} and obtain $\widetilde{\G} = \G(\cH,\cI,\phi')$ where $\phi'$ is obtained from $\phi$ by composition with any permutation. The new graph $\widetilde{\G}$ will be strongly regular with the same parameters as $\Gamma$, but not necessarily isomorphic to $\Gamma$, similar as before. In some simple cases, we will indeed prove that the new graphs we get are not isomorphic to the original graph. In Section \ref{sec:computations}, we use this construction and give some computational results on new graphs cospectral to $W(4)$, and $W(5)$.

Indeed, let $\G = \G(\cH,\cI,\phi)$, let $\sigma$ be any permutation of the points of $\cI$, let $\phi'=\sigma \circ \phi$, and finally $\G^{\sigma}=\G(\cH,\cI,\phi')$, as before. 

\begin{corollary}\label{cor:newgrssigma} For any permutation $\sigma$, $\Gamma^{\sigma}$ is a strongly regular graph cospectral with $\Gamma$.
\end{corollary}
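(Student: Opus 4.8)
The plan is to reduce the claim to the already-proved Corollary~\ref{cor:newgrssigma0} by exploiting the degeneracy of the extremal case $s=t=q$. First I would recall, from the discussion following Theorem~\ref{thm:characterizewithscheme}, that when $s=t$ both the quotient graph $\hat{B}$ and the collinearity graph of $\cI$ are complete graphs on $q^2$ vertices: indeed $\cI$ is an affine plane of order $q$, and any two of its $q^2$ points are collinear, so its collinearity graph is $K_{q^2}$. Consequently every permutation of the points of $\cI$ is an automorphism of this (complete) collinearity graph.

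Given this, the argument is immediate. Since $\sigma$ is a permutation of the $q^2$ points, it is an automorphism of the collinearity graph of $\cI$, so $\phi'=\sigma\circ\phi$ is again an isomorphism from $\hat{B}$ to that collinearity graph (both being $K_{q^2}$, any bijection qualifies). Thus the triple $(\cH,\cI,\phi')$ satisfies all three hypotheses of Corollary~\ref{thm:2ndnbhd}: $\cH$ and $\cI$ are unchanged, and $\phi'$ is a bijection from the fibres of $\cH$ to the points of $\cI$. Hence $\G^{\sigma}=\G(\cH,\cI,\phi')$ is strongly regular with parameters $(q^3+q^2+q+1,\,q^2+q,\,q-1,\,q+1)$. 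As these coincide with the parameters of $\Gamma$, and the spectrum of a strongly regular graph is determined by its parameters, $\G^{\sigma}$ is cospectral with $\Gamma$. Equivalently, one may simply invoke Corollary~\ref{cor:newgrssigma0}, of which the present statement is exactly the $s=t$ instance once one notes that the requirement ``$\sigma$ is an automorphism of the collinearity graph'' becomes vacuous.

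I do not expect any genuine obstacle here; the entire content of the result is carried by Theorem~\ref{thm:characterizewithscheme} (equivalently Corollary~\ref{thm:2ndnbhd}). The only point requiring a moment's care is the verification that the collinearity graph of $\cI$ is indeed complete in the degenerate case, which follows directly from the affine-plane axioms (every pair of points lies on a common line). In effect this corollary records the pleasant feature that, unlike the general $s>t$ situation where one is restricted to automorphisms of a nontrivial collinearity graph, in the symplectic case one has the full symmetric group on $q^2$ points available for producing cospectral mates.
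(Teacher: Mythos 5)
Your proof is correct and follows essentially the same route as the paper: the paper's proof is simply an appeal to Corollary~\ref{thm:2ndnbhd}, whose hypothesis on $\phi$ is only that it be a bijection from the fibres of $\cH$ to the points of $\cI$ (precisely because, as the paper notes after Theorem~\ref{thm:characterizewithscheme}, the quotient and collinearity graphs are both complete when $s=t$), so composing with any permutation preserves the hypotheses. The paper additionally observes, as supplementary discussion, that the same conclusion can be reached by iterated Godsil--McKay switching, but that is not its official proof.
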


\begin{proof}
This follows from Corollary \ref{thm:2ndnbhd}.
\end{proof}

We noted before that $\sigma$ essentially induces a permutation, $\tau$ say, of the fibres (in how these are connected to the first subconstituent).
We note that if $\tau$ is just a transposition of the fibres $W_a$ and $W_b$, then $\G^{\sigma}$ is obtained from $\G$ by deleting all edges between $W_a \cup W_b$ and $\Gamma_1(v)$, and then connecting each vertex in $W_a$ to $\Gamma_1(w)\cap\Gamma_1(v)$ in $\Gamma$ for $w \in W_b$ and connecting each vertex in $W_b$ to $\Gamma_1(y)\cap\Gamma_1(v)$ in $\Gamma$ for $y \in W_a$. In fact, $\Gamma^{\sigma}$ is thus obtained from $\Gamma$ by  Godsil-McKay switching: take $D=\{v\} \cup \Gamma_1(v)$, $C_1=W_a\cup W_b$, and the remaining fibres of $H$ as the other ($q^2-2$) sets $C_i$. Then the assumptions of Lemma \ref{lem:GM} are satisfied, and $\G^{\sigma}$ and $\G$ are cospectral. 
Since any permutation can be written as a product of transpositions, and we can apply Godsil-McKay switching over and over (indeed, similar partitions remain to satisfy the required conditions), the above corollary likewise follows.

For $q \geq 3$, we will show now that for different kinds of permutations, we obtain different graphs. Fix a line $\ell$ of the affine plane, and let $\sigma$ be a permutation that (only) permutes $r$ points of $\ell$ (for $r=2,\dots,q$). Let us call a clique of size $q+1$ a maxclique. In the original generalized quadrangle, these are its lines. By counting for each vertex the number of incident maxcliques, it will follow that for different $r$, the obtained graphs are non-isomorphic.

Indeed, $v$ remains on $q+1$ maxcliques, as do all $q + (q-r)q$ vertices in $\G_1(v)$ that correspond to lines of the affine plane that are parallel to $\ell$, or that are incident to one of the $q-r$ fixed points of the line $\ell$. The other $rq$ vertices in $\G_1(v)$ remain on just $1$ maxclique. The $qr$ vertices in the fibres that are permuted also remain on just $1$ maxclique, the $q(q-r)$ vertices in the other fibres incident to $\ell$ remain on $q+1$ maxcliques, and finally, the $q(q^2-q)$ vertices in all other fibres remain on $q+1-r$ maxcliques. Thus, in total there are $2rq$ vertices that are on $1$ maxclique, $2q^2-(2r-1)q+1$ vertices that are on $q+1$ maxcliques, and $q^3-q^2$ vertices that are on $q+1-r$ maxcliques.

It is clear that the counting gets more technical for other kinds of permutations. One observation we would like to make is that for a permutation of three points not on a line, the $3q$ vertices in the corresponding fibres are no longer on any maxclique. Thus, we obtain yet another different graph, and conclude the following.

\begin{theorem}\label{thm:Wqmates} For $q \geq 3$, the symplectic generalized quadrangle $W(q)$ has at least $q$ cospectral strongly regular graphs that are not geometric. 
\end{theorem}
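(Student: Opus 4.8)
The plan is to show that for $q \geq 3$ the constructions $\G^{\sigma}$ arising from $q$ carefully chosen permutations $\sigma$ of the points of the affine plane $\cI$ are pairwise non-isomorphic and none is geometric. By Corollary \ref{cor:newgrssigma}, each $\G^{\sigma}$ is automatically strongly regular and cospectral with $W(q)$, so the only work is to produce an isomorphism invariant that separates the examples. The natural invariant, already suggested by the discussion preceding the statement, is the \emph{maxclique profile}: for each vertex $x$ count the number of $(q+1)$-cliques through $x$, and record the resulting multiset over all vertices. Since an isomorphism maps maxcliques to maxcliques, this multiset is an isomorphism invariant, and geometricity forces every vertex to lie on exactly $q+1$ maxcliques (the lines through it), so any graph with a vertex on fewer than $q+1$ maxcliques is non-geometric.

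First I would fix a line $\ell$ of the affine plane $\cI$ and, for each $r \in \{2,\dots,q\}$, take $\sigma_r$ to be a permutation that cyclically permutes (or otherwise nontrivially permutes) exactly $r$ of the $q$ points of $\ell$ and fixes everything else. The preceding paragraph of the excerpt has already carried out the bookkeeping: for $\G^{\sigma_r}$ there are $2rq$ vertices on exactly one maxclique, $2q^2-(2r-1)q+1$ vertices on $q+1$ maxcliques, and $q^3-q^2$ vertices on exactly $q+1-r$ maxcliques. I would verify these counts by tracking, for each of the three vertex classes $\{v\}\cup\G_1(v)$ and the fibres, which maxcliques survive the rearrangement of fibre-connections induced by $\sigma_r$, the key point being that a line of $\cI$ whose point-set is moved off its incident blocks no longer yields a clique extending across the subconstituents. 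These profiles are visibly distinct for distinct $r$ (the value $q+1-r$ occurring on the bulk $q^3-q^2$ vertices already distinguishes them, and is strictly less than $q+1$, giving non-geometricity), so $\sigma_2,\dots,\sigma_q$ yield $q-1$ pairwise non-isomorphic non-geometric graphs.

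To reach the full count of $q$, I would add one further example of a genuinely different shape, as flagged in the text: take $\sigma$ to permute three points \emph{not} collinear in $\cI$. Then the three corresponding fibres are disconnected from every surviving maxclique, so $3q$ vertices lie on \emph{zero} maxcliques. No $\sigma_r$ produces a vertex on zero maxcliques (the minimum there is $q+1-r \geq 1$), so this last graph has a maxclique profile distinct from all the previous ones and is again non-geometric. Collecting the $q-1$ graphs from the collinear permutations together with this one off-line example, and noting that $W(q)$ itself is the geometric graph (all vertices on $q+1$ maxcliques) and hence distinct from every $\G^{\sigma}$ constructed, yields at least $q$ pairwise non-isomorphic non-geometric cospectral mates.

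The main obstacle is the maxclique bookkeeping itself: I must argue that the $(q+1)$-cliques I am counting are genuinely \emph{all} the maxcliques of each $\G^{\sigma}$, i.e. that rearranging the fibres does not accidentally create new $(q+1)$-cliques of some unexpected type spanning $v$, $\G_1(v)$, and $\G_2(v)$ in a configuration absent from $W(q)$. This requires knowing the maxclique structure of the base graph $W(q)$ well enough — in a $\GQ(q,q)$ the only $(q+1)$-cliques are the lines, since the strongly regular parameters with $a=q-1$ forbid larger cliques and pin down their intersection pattern — and then checking that the local switching operation only ever destroys lines or leaves them intact, never manufacturing a new clique of full size. Once that structural fact is in hand, the counts reduce to the elementary casework already displayed, and the separation of profiles is immediate.
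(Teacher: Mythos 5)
Your proposal follows essentially the same route as the paper: the paper's proof is precisely the maxclique-count bookkeeping for permutations of $r$ points of a fixed line of the affine plane ($r=2,\dots,q$) together with one permutation of three non-collinear points, with exactly the vertex-count profiles you quote. The only difference is that you explicitly flag the need to verify that the switching creates no new $(q+1)$-cliques --- a point the paper leaves implicit --- and your sketch of why (the clique bound and the GQ axioms pin the maxcliques down to lines) is the right justification.
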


\section{Computations} \label{sec:computations}

Using Corollary \ref{cor:newgrssigma} and an existing strongly regular graph $\Gamma$ with a regular point, as in Section \ref{sec:2nbhdextensions}, we can construct new graphs that are cospectral with, but not necessarily isomorphic to $\Gamma$. Using SageMath \cite{sage}, we have done this for the collinearity graph of $W(q)$, for $q=2,3,4,5$, and for the collinearity graphs of $\GQ(4,2)$, $\GQ(9,3)$, and $\GQ(16,4)$. 

For $q=2,3$, the parameter classes containing $W(q)$ have been fully enumerated
by Spence \cite{Spe2000},
and, though we do not obtain previously unknown graphs, the computation still gives some insight into this operation on graphs with a regular point. For $q=4$, we computed $133\,718$  new isomorphism classes of strongly regular graphs with parameters $(85,20,3,5)$, cospectral with the collinearity graph of $W(4)$. For $q=5$, we computed $27\,298$ new isomorphism classes of strongly regular graphs with parameters $(156,30,4,6)$, cospectral with $W(5)$. We will now summarize the computations that we did on each class. 

For $q=2$, there is a unique generalized quadrangle $\GQ(2,2)$ coming from $W(2)$; its collinearity graph $\Gamma$ is a strongly regular graph with parameters $(15, 6, 1,3)$ and is the unique graph in the class, up to isomorphism. Here, $\Gamma_1(v)$ is three copies of $K_2$ and $\Gamma_2(v)$ is isomorphic to the cube. The affine plane has 6 lines and 4 points; the bipartite incidence graph has 10 vertices and is biregular with degrees 2 and 3, so it is isomorphic to the graph obtained from $K_4$ by subdividing all edges exactly once. Every permutation of the points induces an automorphism of the affine plane graph, thus all $\Gamma^{\sigma}$ are isomorphic to $\Gamma$.

For $q=3$, there are 2 generalized quadrangles, $W(3)$ and its dual $T_2(O)$, both of which are vertex-transitive. Every point of $W(3)$ is a regular point and no point of $T_2(O)$ is a regular point. In the collinearity graph of $W(3)$, $\Gamma_1(v)$ consists of four copies of $K_3$ and $\Gamma_2(v)$ is a graph we will call $H_1$, whose graph6 string is as follows:
\begin{verbatim}
    ZQhSaQOQcaHC``WWObJCG`WD_Y????FwC?B~BBB_]@c@eWOKKKcKoB`E@wP?
\end{verbatim}
The affine plane gives a graph on $21$ vertices; it has $12$ lines and $9$ points and its 
automorphism group $G$
has size $432$. To find all possible $\G^{\sigma}$, we computed the $9$ double cosets of $G \backslash \Sym(9) / G$ and found $9$ non-isomorphic graphs, including the collinearity graph of $W(3)$. The computation times are summarized in Table \ref{tab:computationtimes3}.

\begin{table}[htpb]\label{tab:computationtimes3}
\centering
\begin{tabular}{ll}
Computational task                                   & Computation time \\
\hline 
construct $\Gamma^{\sigma}$ for random $\sigma$      & 32 ms            \\
compute canonical labelling of one $\Gamma^{\sigma}$ & 2.39 ms          \\
compute the double cosets $G \backslash \Sym(9) / G$   & 0.01s            \\
total computation time                               & 0.18s
\end{tabular}
\end{table}

Exactly one of these 9 graphs has a second regular point, where the second subconstitutuent is not isomorphic to $H_1$. The graph6 string of the second subconstituent is 
\begin{verbatim}
    Z@G`PG_GH_HKRGPbGXAOoSgDSOaacCKdaBSK`JDCOxCOLHWcgpAf?SgRGTA?
\end{verbatim}
Repeating the computation gives $9$ distinct graphs, $8$ of which are distinct from the $9$ obtained from the collinearity graph of $W(3)$. 

In total, we construct 17 graphs with at least one regular point, one of which is the collinearity graph of $W(3)$. This parameter class SRG$(40,12,2,4)$ was completely enumerated by Spence \cite{Spe2000} and contains 28 isomorphism classes of graphs. None of the remaining 11 graphs have a regular point. 

For $W(4)$ and $W(5)$, the number of double cosets of $G \backslash \Sym(q^2) / G$ exceeded GAP's memory capacity, so a complete enumeration was not feasible. Nevertheless, we have many new classes of non-isomorphic graphs.

For $W(4)$, to give an idea of the computation time, we sampled $10\,000$ elements of $\Sym(16)$ at random; this resulted in $9\,887$ double coset representatives from different double cosets, and $9\,812$ pairwise non-isomorphic graphs found in 2h 56min 41s. In total, we computed $133\,718$  new isomorphism classes of strongly regular graphs with parameters $(85,20,3,5)$.

For $W(5)$, we also sampled $10\,000$ elements of $\Sym(25)$ at random; this resulted in $9\,969$ double coset representatives from different double cosets, and $9\,968$ pairwise non-isomorphic graphs found in 20h 31min 32s. In total, we computed $27\,298$ new isomorphism classes of strongly regular graphs with parameters $(156,30,4,6)$.

The new graphs, as well as the previously known generalized quadrangles, that we found are available in \cite{newgrs}. 

For $s>t$, we also computed this switching operation for $\GQ(4,2)$, $\GQ(9,3)$, and $\GQ(16,4)$. 

There are $78$ strongly regular graphs with parameters $(45,12,3,3)$ \cite{SRG4512}, one of which is the collinearity graph of $\GQ(4,2)$. Of the $78$ graphs, there are 13 graphs each of which has exactly one vertex, up to actions of the automorphism group, such that its first subconstituent consists of three cliques of order $4$. Of these, six graphs have a regular point. All quotient graphs of the second subconstituents of these six graphs are isomorphic to the complement of the lattice graph. The six graphs come in pairs that can be obtained from each other by our construction. A perhaps surprising consequence of the fact that only covers of the complement of the lattice graph occur, is that the complement of the Shrikhande graph does not have the required $2$-cover scheme. Note that this graph is the collinearity graph of a net, as it comes from the cyclic Latin square.

The second neighbourhood of the point of graph of the unique generalized quadrangle $\GQ(9,3)$ has $243$ vertices and is a $3$-fold cover of the bilinear forms graph, which is strongly regular with parameters $(81, 32, 13,12)$, and has an automorphism group of order $186\,624$. 
We find one graph with parameters $(280,36,8,4)$ which is cospectral to, but not isomorphic to the collinearity graph of $\GQ(9,3)$, as expected, from the work in Section \ref{sec:HGQ}. 

The second neighbourhood of the point of graph of the unique generalized quadrangle $\GQ(16,4)$ has $1\,024$ vertices and is a $4$-fold cover of a strongly regular graph with parameters $(256, 75, 26, 20)$, 
and whose automorphism group has order $11\,059\,200$ and is isomorphic to the bilinear forms graph. We find one graph with parameters $(1105,80,15,5)$ which is cospectral to, but not isomorphic to the collinearity graph of $\GQ(16,4)$. Each vertex of $\GQ(16,4)$ lies on $5$ maximum cliques of order $17$, but, in the cospectral mate we constructed, there is a vertex $v$ such that $v$ is on $5$ maximum cliques of order $17$, the $80$ neighbours of $v$ lie on exactly one clique of order $17$ and the remaining $1\,024$ vertices in the second subconsituent of $v$ lie on no maximum cliques, as one would expect from Section \ref{sec:HGQ}. 

\section{Further observations}\label{sec:end}

\subsection{Another characterization of regular points}

Even though our definition of a regular point in a pseudo-geometric strongly regular graph led to our main results, it is perhaps somewhat unnatural (or at least, non-elementary). We will next give a characterization that is more in line with the definition of regular points in generalized quadrangles. 

\begin{theorem}\label{thm:simplescharacterization} A vertex $v$ in a pseudo-geometric strongly regular graph with parameters $((ts+1)(s+1),(t+1)s,s-1,t+1)$ is regular if and only if the first subconstituent $\G_1(v)$ is a disjoint union of cliques and the sets $\{v,x\}^{\perp \perp}$ are cocliques of size $t+1$ for all $x$ not adjacent to $v$.
\end{theorem}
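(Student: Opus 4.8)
The plan is to prove Theorem \ref{thm:simplescharacterization} by showing that the elementary geometric condition (first subconstituent a disjoint union of cliques, and spans being cocliques of size $t+1$) is equivalent to the existence of the $4$-class scheme $\cH$ with eigenmatrix \eqref{eq:eigenmatrix}, which is the \emph{definition} of a regular point used throughout the paper. One direction is essentially already in hand: if $v$ is regular in the scheme sense, then Lemma \ref{lem:spectrum} gives that $C$ is a disjoint union of $t+1$ $s$-cliques (so $\G_1(v)$ is a disjoint union of cliques), and the distance-$3$ relation $B_3=I\otimes J_t - I$ identifies the fibres as $t$-cocliques in $\G_2(v)$; these fibres are exactly the sets $\{v,x\}^{\perp\perp}\setminus\{v\}$ (adding back $v$ gives cocliques of size $t+1$, since $v$ is non-adjacent to everything in $\G_2(v)$). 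So the forward direction is a matter of assembling facts already established.

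\medskip

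The substantive direction is the converse: assuming only the geometric hypotheses, I must reconstruct the full association scheme. First I would use the clique structure of $\G_1(v)$ to pin down equation \eqref{eq:C^2}, i.e. $C^2=(s-1)I+(s-2)C$; since $\G$ is strongly regular with the given parameters, the $k$-regularity forces each vertex of $\G_1(v)$ to have $a=s-1$ neighbours inside $\G_1(v)$, so a disjoint union of cliques must consist of $t+1$ cliques of size $s$. Next I would define the candidate relations on $\G_2(v)$: let $B$ be adjacency in $\G_2(v)$, let $B_3$ record ``same span minus $v$'' (the cocliques of size $t$, once $v$ is removed), and split the distance-two graph of $B$ into $B_2$ and $B_4$ according to whether two vertices have $t$ or $t+1$ common neighbours in $\G$ — this bifurcation is exactly what \eqref{eq:B^2} predicts. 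The key is then to verify that these five relations $\{I,B,B_2,B_3,B_4\}$ close under multiplication with the intersection numbers dictated by \eqref{eq:eigenmatrix}, i.e. that they genuinely form an association scheme.

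\medskip

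The main obstacle will be proving that the distance-two graph of $\G_2(v)$ splits \emph{cleanly} into exactly the two relations $B_2$ and $B_4$ with constant intersection numbers, purely from the hypotheses, rather than assuming the scheme. The engine for this should be the quadratic equation \eqref{eq:Asquared} for $A$ together with the partition \eqref{eq:Apartition}: equation \eqref{eq:B^2N}, namely $B^2+N^\top N=(s-t-2)B+(t+1)(s-1)I+(t+1)J$, controls the number of common neighbours of two vertices of $\G_2(v)$, and the coclique hypothesis forces $N^\top N$ to have the block structure $N=M\otimes J_{1t}$ seen earlier, so that entries of $B^2$ off the diagonal take only the values $t$ (contributing $B_2$) or $t+1$ (contributing $B_4$). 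I would leverage the regularity of $\G$ and the fixed span-size $t+1$ to show these counts are constant on each relation, which is precisely what promotes the relations to an association scheme. Once closure is established, the eigenmatrix is forced by the parameters and the $t$-coclique fibre structure, matching \eqref{eq:eigenmatrix} and completing the equivalence.
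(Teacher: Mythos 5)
Your overall strategy is the same as the paper's: take the scheme-theoretic definition of a regular point as the target, dispatch the forward direction from the results of Section \ref{sec:4class} (via Lemma \ref{lem:spectrum} and the identification of the fibres with the sets $\{v,x\}^{\perp\perp}\setminus\{v\}$), and for the converse rebuild the relations $B,B_2,B_3,B_4$ from the geometric data and the subconstituent equations \eqref{eq:C^2N}--\eqref{eq:B^2N}. The forward direction and the setup of the converse (the spans partition $\G_2(v)$, $N=M\otimes J_{1t}$, $\cI$ is a $(t+1)$-net, and the split of non-adjacent pairs into those with $t$ and those with $t+1$ common neighbours) are all sound and match the paper.

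However, you have misplaced where the real difficulty lies, and the step you skip is the one that needs a genuine idea. The $B_2$/$B_4$ bifurcation you single out as the ``main obstacle'' is essentially automatic: for non-adjacent $x,y$ in different fibres, \eqref{eq:B^2N} gives $(B^2)_{xy}=t+1-(N^\top N)_{xy}$, and $(N^\top N)_{xy}\in\{0,1\}$ follows from the net structure, so defining $B_2$ and $B_4$ by that count makes $p^2_{11}=t$ and $p^4_{11}=t+1$ constant by fiat. The genuinely hard point is the \emph{distance-one} relation: for adjacent $x,y$ in $\G_2(v)$ the same equation only yields $(B^2)_{xy}\in\{s-2,s-1\}$, according as the two fibres do or do not share a block, and nothing in your outline rules out the value $s-1$. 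If it occurred, $p^1_{11}$ would not be constant, $B$ would fail to be a cover of the collinearity graph $\hat{B}$ of $\cI$, and the relations would not close (the later products such as $B(B_3+I)$ rely precisely on $B$ covering $\hat{B}$). The paper closes this gap with a global counting argument you do not have: since $C$ is a disjoint union of $t+1$ $s$-cliques, Lemma \ref{lem:spectrum} determines the full spectrum of $B$, hence $\tr(B^3)$ and the total number of triangles in $\G_2(v)$; comparing this total with the fact that each edge lies in $s-2$ or $s-1$ triangles forces every edge to lie in exactly $s-2$. You would need to add this (or an equivalent argument) before the closure of the putative scheme, which you assert rather than prove, can be carried out.
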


\begin{proof} One direction is clear from the results in Section \ref{sec:4class}. For the other direction, let us assume that $\G_1(v)$ is indeed a disjoint union of cliques and the sets $\{v,x\}^{\perp \perp}$ are cocliques of size $t+1$ for all $x$ not adjacent to $v$, and then show that we obtain an association scheme on the second subconstituent (with the right parameters). Note first that the sets $\{v,x\}^{\perp \perp} \setminus \{v\}$ partition the vertex set of the second subconstituent (because having the same  common neighbors with $v$ is an equivalence relation); indeed, these will be the fibres of the scheme that is generated by $\G_2(v)$. As before, we can now define a natural incidence relation $\cI$ between the fibres (points $\cP$) and the vertices in $\G_1(v)$ (blocks $\cL$). Because two vertices in the same clique in $\G_1(v)$ have already $s-1$ common neighbors in that clique, they share no neighbor in $\G_2(v)$, so as blocks of $\cI$ they are parallel. It also implies that each vertex in $\G_2(v)$ has precisely one neighbor from each of the cliques in $\G_1(v)$. Because $c=t+1$, it also follows that two blocks from different cliques meet in precisely one fibre, hence $\cI$ is a $(t+1)$-net of order $s$.

Next, we can define the relations $B ,B_2,B_3,B_4$ of the (putative) scheme, using the induced adjacency matrix $B$ of $\G_2(v)$, the fibres ($B_3$) and the incidence relation $\cI$. Indeed, two non-adjacent vertices (from different fibres) are related in $B_4$ if the corresponding fibres are not collinear in $\cI$, and $B_2=J-I-B-B_3-B_4$ is the remaining relation (nonadjacent in collinear fibres).
Note that because $c=t+1$, it follows that vertices in the same fibre have $p^3_{11}=0$ common neighbors. Two vertices that are $B_4$-related have $p^4_{11}=t+1$ common neighbors (as they have no common neighbor in the first subconstituent), and similarly $p^2_{11}=t$.

Now the crucial question is whether $B$ corresponds to collinearity like $B_2$ does, or in other words, whether $B$ is a cover of the collinearity graph, $\hat{B}$ say, of $\cI$. To answer this question in the affirmative, we first note that two adjacent vertices in the second subconstituent have either 0 or 1 common neighbor in the first subconstituent, and hence $s-1$ or $s-2$ in the second subconstituent. By Lemma \ref{lem:spectrum}, we know the spectrum of $B$, and this implies that we know the number of triangles in $B$. This implies that every edge must be in $s-2$ triangles, so $p^1_{11}=s-2$, but more importantly, $B$ is indeed a cover of the collinearity graph of $\cI$. 

Note that we have in fact derived \eqref{eq:B^2}. That the remaining intersection numbers are well-defined follows from the ones that we already have in combination with the quotient scheme on the fibres.
\end{proof}

\subsection{Constructing cospectral mates of \texorpdfstring{$\GQ(q-1,q+1)$}{GQ(q-1,q+1)} }
\label{sec:q-1q+1}

By introducing the concept of a regular point in a generalized quadrangle of order $(q,q)$, Payne \cite{PAYNE1971201} constructed generalized quadrangles of order $(q-1,q+1)$, with a spread, on the points noncollinear with the regular point (see also \cite[3.1.4]{PT}). Brouwer \cite{BROUWER1984101} already observed that removing a spread (i.e., a partition of the vertices into $(s+1)$-cliques) from a pseudo-geometric strongly regular graph with parameters $((s+1)(st+1), s(t+1), s-1,t+1)$ gives a distance-regular antipodal $(s+1)$-cover of a complete graph on $st+1$ vertices (see also \cite[Prop. 12.5.2]{BCN}), and the other way around. 

Because our characterization of regular points in Corollary \ref{thm:2ndnbhd} gives rise to an antipodal $q$-cover of a complete graph on $q^2$ vertices, by adding edges to the fibres of such a cover to change the cocliques into cliques (adding the spread), we obtain pseudo-geometric strongly regular graphs with the same parameters as the collinearity graph of a $\GQ(q-1,q+1)$. Of course, if we start from a regular point in a $\GQ(q,q)$, or even from the same regular point in one of our new pseudo-geometric graphs, we obtain nothing new. However, in the new graphs, there might also exist other regular points, whose second subconstituents may give other distance-regular antipodal covers, and other pseudo-geometric strongly regular graphs.

For $W(3)$, this process gives both spreads of $\GQ(2,4)$, and hence both distance-regular antipodal $3$-covers of $K_9$. Note that $\GQ(2,4)$ itself is determined as a strongly regular graph by its parameters. From $W(4)$, we obtain $6$ strongly regular graphs with parameters $(64,18,2,6)$ (with a spread). By a different computation, in SageMath \cite{sage}, we checked that of the 167 strongly regular graphs with parameters $(64,18,2,6)$ \cite{HaemersWillemH2001TPGf}, there are $16$ that have a spread (and which, in turn, can be used to construct pseudo-geometric graphs with the same parameters as $W(4)$). From $W(5)$, we obtain one new strongly regular graph with parameters $(125,28,3,7)$, which is not isomorphic to $\GQ(4,6)$, or any of the other previously known strongly regular graphs with these parameters. The new strongly regular graph  can be found in the author's code repository at \cite{newgrs}. 

We finally note that removing spreads in strongly regular graphs can be done in greater generality to obtain imprimitive $3$-class association schemes, as was shown by Haemers and Tonchev \cite{HaemersTonchev1996}. Unfortunately, this does not apply to the schemes in Theorem \ref{thm:characterizewithscheme} (with $s>t$).

\subsection{Geometric and pseudo-geometric graphs} 

The point graph of a partial geometry $\pg(s,t,\alpha)$ is a strongly regular graph with parameters
\begin{equation}\label{eq:pgSRG}
    \left( (s+1)\left(\frac{st}{\alpha}+1 \right), s(t+1), s-1 + t(\alpha-1) , \alpha (t+1) \right).
\end{equation}
A strongly regular graph which is the point graph of a partial geometry is said to be \textsl{geometric}. A strongly regular graph with parameters as given in \eqref{eq:pgSRG}, for integers $s$ such that $1\leq \alpha \leq \min \{s, t+1\}$ and $\alpha$ divides $st(s+1)$, is said to be \textsl{pseudo-geometric}. There are many results which determine whether or not certain classes of  pseudo-geometric strongly regular graph are geometric, for example by Bose \cite{Bose1963}, who first raised the problem. See also \cite{BrouwervanLint}. 

A generalized quadrangle $\GQ(s,t)$ is a partial geometry $\pg(s,t,1)$. For prime powers $q$, the following list consists of the values for $(s,t)$ for which a $\GQ(s,t)$ is known to exist: 
\[
(1,q), (q,1), (q-1,q+1), (q+1,q-1), (q,q), (q,q^2), (q^2,q), (q^2, q^3), (q^3,q^2). 
\]
Cameron, Goethals, and Seidel \cite{CAMERON1978257} showed that every  strongly regular graph with the same parameters as $\GQ(q,q^2)$ is geometric. By the results in this paper, there exist pseudo-geometric but not geometric graphs cospectral to $\GQ(q^2,q)$ and $\GQ(q,q)$, for all prime powers $q$. We also exhibit a sporadic example of a pseudo-geometric but not geometric graph cospectral to $\GQ(q-1,q+1)$, for $q=5$. The point graphs $\GQ(1,q)$ are imprimitive and thus are determined by their spectrum (and hence geometric). The point graph of $\GQ(q,1)$ is known as a grid graph and is also determined by its spectrum, except for $q=3$, in which case we have the (non-geometric) Shrikhande graph \cite{Shrikhande10.1214/aoms/1177706207}. Cossidente and Pavese \cite{CosPav2016} construct 
pseudo-geometric, but not geometric, strongly regular graphs having the same parameters as the point graph of a $\GQ(q^2, q^3)$, when $q$ is a prime power, and of a $\GQ(q, q)$, when $q$ an even power of a prime. 

Wallis \cite{Wallis} constructed non-geometric strongly regular graphs with the same parameters of a $\GQ(q+1, q-1)$, for all prime powers $q$. Fon-Der-Flaass rediscovered this construction \cite[Constr.~1]{FDF2002}; on top of this he constructed strongly regular graphs with the same parameters as a  $\GQ(q,q)$ \cite[Constr.~2]{FDF2002} or a $\GQ(q-1,q+1)$ \cite[Constr.~3]{FDF2002} (for all prime powers $q$). It is not hard to see that in all three constructions, non-geometric examples can be obtained whenever (the ``line size'') $s+1$ is at least $4$. We also note that in Fon-Der-Flaass' second construction, the so-called new vertices are regular points.


What remains is the question whether there exist non-geometric strongly regular graphs with the same parameters as a $\GQ(q^3,q^2)$, for infinitely many values of $q$.


\subsection{Quasi-regular points}

A quasi-regular point in a generalized quadrangle $\GQ(s,t)$ with $s<t$ is a point $v$ such that each triad $(v,x,y)$ of noncollinear points has $|\{v,x,y\}^{\perp}\}|$ either $q+1$ or $0$ (for some $q$).

This clearly defines two relations on noncollinear points in the second subconstituent $\Gamma_2(v)$, and indeed one obtains a $3$-class association scheme on the second subconstituent, as was shown by Payne \cite{Payne1992} (see also \cite{Hobart1993}), if $t<s^2$. 
Payne in fact showed that there is a larger coherent configuration involved.

One may thus wonder whether results analogous to ours could be obtained when defining quasi-regular points in pseudo-geometric graphs.

In the exceptional (extremal) case that $t=s^2$, the perp of each triad has size $q+1$, and the second subconstituent $\Gamma_2(v)$ is a strongly regular graph.
It in fact follows easily from considering the eigenvalues that the second subconstituent with respect to any point is strongly regular. Moreover, by a result of Cameron, Goethals, and Seidel \cite{CAMERON1978257}, in this case every pseudo-geometric graph is a generalized quadrangle, as we saw before. Also here there is an interesting incidence relation between the subconstituents: this is a strongly regular design as defined by Higman \cite{HIGMAN1988411}.


\end{document}